\let\na\nabla  
\let\eps\varepsilon
\newcommand{\R}{{\mathbb R}} 
\newcommand{\C}{{\mathbb C}}  
\newcommand{\id}{{\mathbb I}}
\newcommand{\pare}[1]{\left(#1\right)}
\renewcommand{\d}{\partial}
\newcommand{\cfun}{\mathcal C}
\newcommand{\nlin}{\mathcal N}
\DeclareMathOperator{\sign}{sign}
\newtheorem{theorem}{Theorem}   
\newtheorem{lemma}[theorem]{Lemma}   
\newtheorem{proposition}[theorem]{Proposition}   
\newtheorem{remark}[theorem]{Remark}   
\newtheorem{corollary}[theorem]{Corollary}  
\newtheorem{definition}[theorem]{Definition}
\title[Asymptotic Behavior of NLS systems with Linear Coupling]{Asymptotic Behavior of Nonlinear Schr\"odinger Systems with Linear Coupling}
\author{Paolo Antonelli}
\address{Centro di Ricerca Matematica Ennio De Giorgi, Scuola Normale Superiore,\\
Piazza dei Cavalieri, 3,\\
56100 Pisa, Italy\\ paolo.antonelli@sns.it}
\author{Rada Maria Weish\"aupl}
\address{Faculty of Mathematics, Vienna University, Oskar-Morgenstern-Platz 1\\
1090 Wien, Austria\\ rada.weishaeupl@univie.ac.at}
\date\today
\begin{document}  

 \maketitle
\begin{abstract}
A system of two coupled nonlinear Schr\"odinger equations is treated. In addition, a linear coupling which models an external driven field described by the Rabi frequency is considered. Asymptotics for large Rabi frequency are carried out. Convergence in the appropriate Strichartz space is proven. As a consequence, the global existence of the limiting system gives a criterion for the long time behavior of the original system.
\end{abstract}

{\small
{\bf Key words:} nonlinear Schr\"odinger system, Rabi frequency, asymptotics, Strichartz estimates

{\bf AMS Subject Classification}: 35Q55, 35B40.}

\section{Introduction}
In this paper we consider a model for two-component Bose-Einstein condensates irradiated by an external electromagnetic field. The dynamics are described by two coupled nonlinear Schr\"odinger equations
\begin{equation}\label{eq:nlss1}
i\d_t\psi_j=-\frac12\Delta\psi_j+V(x)\psi_j+\beta_{jj}|\psi_j|^2\psi_j+\beta_{jk}|\psi_k|^2\psi_j+\lambda\psi_k,
\end{equation}
$j, k = 1, 2$, in $(t, x)\in\R\times\R^N$, $N = 1, 2, 3$. Here $V(x)=\frac{\gamma^2}{2}|x|^2$ is the trapping potential, $\beta_{jj}$ and $\beta_{jk}$ with $\beta_{12}=\beta_{21}$  are the (scaled) intra- and inter-specific scattering lengths, respectively, $\lambda$ is the Rabi frequency related to the external electric field.
\newline
Our main goal in this paper is to investigate the asymptotic behavior of solutions to equation \eqref{eq:nlss1} when the Rabi frequency $\lambda$ becomes very large. In particular, we shall show that, after a suitable transformation of the system, the asymptotic behavior is described by two effective coupled nonlinear Schr\"odinger equations with the same inter- and intra-specific scattering lengths.
\newline
The equations we consider here arise in the modeling of a Bose-Einstein condensate formed of atoms in two different hyperfine states in the same harmonic trap \cite{Ballagh}. A binary BEC of $^{87}$Rb atoms in different spin states was produced for the first time at JILA \cite{Myatt97}. The irradiation of the condensate with an electric field induces a linear coupling in the overlap region, which causes a Josephson-type oscillation between the two species \cite{Will99}. Such condensates are very interesting in physics, since  it is possible to measure the relative phase of one component with respect to the other one 
\cite{Hall:1998fk}. Furthermore, by controlling locally the relative phase, it is also possible to produce vortices 
\cite{williams1999preparing}, \cite{matthews1999vortices}. Another interesting application of this model is the creation of a stable \emph{BEC droplet} \cite{Saito} without using highly oscillating magnetic fields through Feshbach resonance methods: indeed the external electric field is constant but it induces an effective oscillation of the scattering lengths.
\newline
There is an extensive literature on systems of nonlinear Schr\"odinger equations, and giving here a comprehensive picture of all the results in different cases would be a quite hard task.
In particular, the case without linear coupling, i.e. $\lambda=0$ in \eqref{eq:nlss1}, is considered in most of the cases. 
\cite{FanMont, LiWuLai, MaZhao} deal with the system without trapping potential and a more general class of power-type nonlinearities, giving sufficient conditions for global existence of solutions, \cite{MR2283958} analyses the existence of ground state solutions when the nonlinearities are cubic, and \cite{MR2449345} studies vortices in the same framework. In the presence of a trapping potential we mention \cite{LinWei, ChenWei, Prytula}.
On the other hand, the case with linear coupling, i.e. $\lambda\neq0$ in \eqref{eq:nlss1}, is less treated, in 
\cite{Bao} existence and uniqueness of ground states is proved, performing also asymptotics for such solutions when 
$|\lambda|\to\infty$, whereas in \cite{Juengel11} the time-dependent system with a more general class of power-type nonlinearities is considered. Here the authors give sufficient conditions for global existence or blow up, and a semi-implicit formula shows the mass transport between the two components.
\newline
Moreover, as it will be clear in the next Sections, the analysis of system \eqref{eq:nlss1} is strictly related to the study of nonlinear Schr\"odinger equations with time-oscillating nonlinearities. This link is well known in physics, see \cite{Saito}. Cazenave, Scialom \cite{Caz10} were the first to rigorously study such problems, providing a convergence result in the highly oscillating regime, see also \cite{Abdullaev} for numerical results, \cite{Fang} for the case with an energy-critical nonlinearity, 
 \cite{CarGamPan} for a system of coupled nonlinear Schr\"odinger equations. Similar problems are also studied in some related frameworks, for example in \cite{AS10} the authors carry out an asymptotic analysis for a \emph{dispersion managed} NLS, i.e. when there is a highly oscillating coefficient in front of the Laplacian, or in \cite{MR2809621} the authors consider the KdV equation with an time-dependent nonlinearity.


In Section \ref{sect:prel} we introduce the notations and state the main result of our paper. We then dedicate Section 
\ref{sect:LGWP} to the analysis of system \eqref{eq:nlss1}, establishing the cases of global existence of solutions or possible occurrence of blow-up, depending on the different choices of the coefficients $\beta_{jk}$'s.
Then in Section \ref{sect:trans} we transform \eqref{eq:nlss1} in a way which is more suitable to study the asymptotics when 
$|\lambda|$ becomes large. We then recover the limiting system, and in Section \ref{sect:as} we prove rigorous convergence. 
\section{Preliminaries and Main Result}\label{sect:prel}
In what follows $C$ will denote a generic constant greater than 1, which may possibly change from line to line.
With $\Re$ and $\Im$ we denote the real and imaginary part of a complex number, respectively. By $z^*$ we denote the complex conjugate of $z$. The scalar product between two vectors $v, v'$ will be denoted by $\langle v, v'\rangle$.
\newline
Since we are dealing with two-component Schr\"odinger systems, we shall indicate with capital letters the two-dimensional vector fields describing the wave-function of a two-component quantum system. For example we shall write 
$\Psi^t=(\psi_1, \psi_2)$, or $\Psi_0^t=(\psi_{1, 0}, \psi_{2, 0})$, $\Psi^*=(\psi_1^*, \psi_2^*)$ and so on. Consequently, we may also denote
\begin{equation*}
|\Psi|^2=|\psi_1|^2+|\psi_2|^2.
\end{equation*}
We shall denote by $L^p(\R^N), W^{1, p}(\R^N)$, the usual Lebesgue and Sobolev spaces, respectively. We shall also make use of the mixed space-time Lebesgue (or Sobolev) spaces, so that for example $L^q(I;L^r(\R^N))$ denote the space of those functions $\Psi$ having the following norm finite,
\begin{equation*}
\|\Psi\|_{L^q(I;L^r(\R^N))}:=\pare{\int_I\pare{\int_{\R^N}|\Psi(t, x)|^rdx}^{q/r}dt}^{\frac{1}{r}}.
\end{equation*}
We often shorten notation $L^q_tL^r_x(I\times\R^N)=L^q(I;L^r(\R^N))$ or $L^q_tL^r_x$ if there is no source of ambiguity.
\newline
We are interested in studying system \eqref{eq:nlss1} in the \emph{energy space}, which is defined by
\begin{equation*}
\Sigma(\R^N):=\{\Psi\in H^1(\R^N);\;|\cdot|\Psi\in L^2(\R^N)\}.
\end{equation*}
Let us consider the Hamiltonian with confining potential, $H=-\frac12\Delta+\frac{\gamma^2}{2}|x|^2$. The associated propagator is $S_0(t):=e^{-itH}$, is unitary on $L^2(\R^N)$ and from Melher's formula (see for example \cite{Car05,Oh}) we may show it satisfies a dispersive estimate for short times, namely
\begin{equation}\label{eq:disp_est}
\|S_0(t)f\|_{L^\infty(\R^N)}\lesssim|t|^{-\frac{N}{2}}\|f\|_{L^1(\R^N)},\quad|t|\leq\delta,
\end{equation}
for some small $\delta>0$ depending on $\gamma$. 
\newline
Furthermore, in what follows we also need the following commutation formulas for $H$:
\begin{equation}\label{eq:comm}
[\nabla, H]=\gamma^2x,\quad[x, H]=\nabla.
\end{equation}
Although \eqref{eq:disp_est} holds only for short times, we may infer the same Strichartz estimates for $S_0(t)$ as for the free Schr\"odinger propagator without confining potential, but only locally in time, i.e. the constants appearing in the inequalities below depend on the length of the time interval. Here we state the results we need, for further details we address the reader to \cite{Car05}, Section 2.
\begin{definition}
We say $(q, r)$ is an \emph{admissible pair} if $2\leq r\leq\frac{2N}{N-2}$ ($2\leq r\leq\infty$ for $N=1$, $2\leq r<\infty$ for $N=2$), and
\begin{equation}\label{eq:adm_pair}
\frac{1}{q}=\frac{N}{2}\pare{\frac{1}{2}-\frac{1}{r}}.
\end{equation}
\end{definition}
\begin{proposition}\label{prop:strich}
Let $(q, r), (\tilde q, \tilde r)$ be two arbitrary admissible pairs. Then, for any compact time interval, we have
\begin{equation*}
\begin{aligned}
\|S_0(t)f\|_{L^q_tL^r_x(I\times\R^N)}\leq &C(|I|, r)\|f\|_{L^2(\R^N)},\\
\|\int_0^tS_0(t-s)F(s)ds\|_{L^q_tL^r_x(I\times\R^N)}\leq &C(|I|, r, \tilde r)\|F\|_{L^{\tilde q'}_tL^{\tilde r'}_x(I\times\R^N)}.
\end{aligned}
\end{equation*}
\end{proposition}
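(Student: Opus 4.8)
The plan is to reduce the whole proposition to the short-time dispersive estimate \eqref{eq:disp_est} together with the unitarity of $S_0(t)$ on $L^2$, and then to cover an arbitrary compact interval $I$ by finitely many intervals on which this dispersion is available. First I would interpolate: since $\|S_0(t)f\|_{L^2}=\|f\|_{L^2}$ and \eqref{eq:disp_est} gives $\|S_0(t)f\|_{L^\infty}\lesssim|t|^{-N/2}\|f\|_{L^1}$ for $|t|\leq\delta$, the Riesz--Thorin theorem yields, for every admissible $r$ and every $|t|\leq\delta$,
\[
\|S_0(t)f\|_{L^r(\R^N)}\lesssim|t|^{-N(1/2-1/r)}\|f\|_{L^{r'}(\R^N)},
\]
where $N(1/2-1/r)=2/q$ by \eqref{eq:adm_pair}.

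Next I would establish the estimates on a single subinterval $J\subset I$ of length $|J|\leq\delta$, where every time difference $|t-s|$ is $\leq\delta$ so the interpolated bound applies uniformly. Setting $Tf=S_0(\cdot)f$, one has $T^*F=\int_J S_0(-s)F(s)\,ds$ and $TT^*F=\int_J S_0(t-s)F(s)\,ds$; Minkowski's inequality and the dispersive bound give $\|TT^*F(t)\|_{L^r_x}\lesssim\int_J|t-s|^{-2/q}\|F(s)\|_{L^{r'}_x}\,ds$, which the Hardy--Littlewood--Sobolev inequality controls by $\|F\|_{L^{q'}_tL^{r'}_x(J)}$ when $q>2$ (the endpoint $q=2$ being recovered by the Keel--Tao argument). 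By the standard $TT^*$ duality this is equivalent to the homogeneous bound $\|S_0 f\|_{L^q_tL^r_x(J)}\lesssim\|f\|_{L^2}$ and its dual $\|\int_J S_0(-s)F(s)\,ds\|_{L^2}\lesssim\|F\|_{L^{q'}_tL^{r'}_x(J)}$, with constants depending only on $\delta$ and $r$. For a second admissible pair $(\tilde q,\tilde r)$, composing the dual estimate for $(\tilde q,\tilde r)$ with the homogeneous estimate for $(q,r)$ yields the non-retarded inhomogeneous bound on $J$, and the Christ--Kiselev lemma upgrades it to the retarded integral $\int_0^t$ (this step being licit away from the double endpoint $q=\tilde q=2$).

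It remains to patch. I would cover $I$ by $M\sim|I|/\delta$ consecutive subintervals $I_1,\dots,I_M$ of length $\leq\delta$. Summing the homogeneous estimate in the $q$-th power over the $I_m$ raises the constant by $M^{1/q}\sim(|I|/\delta)^{1/q}$, which is precisely where the dependence on $|I|$ enters. For the inhomogeneous estimate the retarded integral couples the subintervals, so for $t\in I_n$ I would split $\int_0^t=\sum_{m<n}\int_{I_m}+\int_{I_n\cap\{s<t\}}$: the diagonal term is the local retarded estimate just proved, while each off-diagonal term is rewritten as $S_0(t)\bigl(\int_{I_m}S_0(-s)F(s)\,ds\bigr)$ and bounded by composing the $L^2$-valued dual estimate on $I_m$ with the homogeneous estimate for $S_0(t)$ on $I$, so that the dispersive kernel is never evaluated across a gap larger than $\delta$. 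Summing the finitely many contributions produces the claimed constant $C(|I|,r,\tilde r)$.

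The main obstacle is exactly the locality of \eqref{eq:disp_est}: unlike the free Schr\"odinger propagator, $S_0(t)$ disperses only for $|t|\leq\delta$, so the one-shot global $TT^*$ computation is unavailable and one must localize, handle the off-diagonal Duhamel pieces by factoring them through $L^2$, and then carefully track how the number $M$ of subintervals feeds into the final constant.
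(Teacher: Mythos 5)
Your proposal is correct, but note that the paper itself contains no proof of Proposition \ref{prop:strich}: it simply remarks that the short-time dispersive estimate \eqref{eq:disp_est} implies local-in-time Strichartz estimates and refers the reader to \cite{Car05}, Section 2. What you have written is a reconstruction of the standard argument underlying that citation, and it is essentially sound: Riesz--Thorin between unitarity on $L^2$ and \eqref{eq:disp_est} gives the $L^{r'}\to L^r$ decay for $|t|\leq\delta$; the $TT^*$/Hardy--Littlewood--Sobolev scheme (with $T^*F=\int_J S_0(-s)F(s)\,ds$ and the group property used to translate each subinterval back to the origin) gives the homogeneous, dual, and retarded bounds on subintervals of length at most $\delta$; and the patching step correctly identifies where the $|I|$-dependence of the constant enters, namely the factor $M^{1/q}$ from summing over the $M\sim|I|/\delta$ subintervals together with the $\sim M$ off-diagonal Duhamel pieces, each factored through $L^2$ as $S_0(t)\bigl(\int_{I_m}S_0(-s)F(s)\,ds\bigr)$ so that the dispersive kernel is never evaluated beyond time $\delta$. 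The one loose end is the double endpoint $q=\tilde q=2$, which the paper's definition of admissibility does allow when $N=3$ (it permits $r=2N/(N-2)$): there Christ--Kiselev is genuinely unavailable, and your parenthetical leaves that case open. The fix is cosmetic rather than structural: the Keel--Tao theorem, which you already invoke for the homogeneous endpoint bound, yields the \emph{retarded} local estimate for all admissible pairs including the endpoint, so you can take the diagonal retarded bound on each subinterval directly from it and dispense with Christ--Kiselev entirely; the patching argument then goes through unchanged, since it uses only the group property and the homogeneous and dual bounds already established.
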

 We may rewrite the system of equations \eqref{eq:nlss1} in the following compact way
\begin{equation}\label{eq:nlss}
\left\{\begin{aligned}
i\d_t\Psi=&-\frac12\Delta\Psi+\frac{\gamma^2}{2}|x|^2\Psi+\tilde B[\Psi]\Psi+A\Psi,\\
\Psi(0)=&\Psi_0,
\end{aligned}\right.
\end{equation}
where the nonlinearity is given by the matrix
\begin{equation}\label{eq:B_tilde}
\tilde B[\Psi]=\left(\begin{array}{cc}\beta_{11}|\psi_1|^2&\beta_{12}\psi_1\psi_2^*\\
\beta_{12}\psi_1^*\psi_2&\beta_{22}|\psi_2|^2\end{array}\right),
\end{equation}
and $A$ determines the linear coupling,
\begin{equation}\label{eq:A}
A=\left(\begin{array}{cc}&\lambda\\\lambda&\end{array}\right).
\end{equation}
The energy and the mass associated to system \eqref{eq:nlss} are the following
\begin{equation}\label{eq:nlss_en2}
E(t)=\int_{\R^N}\Big(\frac12|\nabla\Psi|^2+\frac{\gamma^2}{2}|x|^2|\Psi|^2+\frac12\Psi^*\tilde B[\Psi]\Psi
+2\lambda\Re(\psi_1^*\psi_2) \Big)(x,t)dx.
\end{equation}
\begin{equation}\label{eq:nlss_mass2}
M(t)=\int_{\R^N}|\Psi(x,t)|^2dx,
\end{equation}
and they are conserved quantities along the flow of solutions to \eqref{eq:nlss}.
It is straightforward to see that, for any $\Psi\in\Sigma(\R^N)$, the energy in \eqref{eq:nlss_en2} is finite.
\newline
As we already anticipated in the Introduction, in this paper we shall prove that, after a suitable transformation of the system, the asymptotic behavior is described by two effective coupled NLS equations which have the same inter- and intra-specific scattering lengths. Indeed, the two effective NLS equations are given by
\begin{equation}\label{eq:u_sys_main}
i\d_tu_j=-\frac12\Delta u_j+\frac{\gamma^2}{2}u_j+\chi|u_j|^2u_j+\tilde{\chi} |u_k|^2u_j,\quad j, k = 1, 2,
\end{equation}
where
\begin{equation}\label{eq:chi}
\tilde\chi:=\frac{\beta_{11}+\beta_{22}}{2} \quad \chi:=\frac{\beta_{11}+2\beta_{12}+\beta_{22}}{4}.
\end{equation}
The main Theorem we prove in this paper is
\begin{theorem}\label{thm:main1}
Let $\Psi_0\in\Sigma(\R^N)$. For any $\lambda\in\R$, let $\Psi^\lambda$ be the unique maximal solution to \eqref{eq:nlss}. Let $U^t=(u_1,u_2)$ be the solution of the system \eqref{eq:u_sys_main} in $[0, S_{max})\times\R^N$, with initial datum
\begin{equation*}
\left(\begin{array}{c}u_1(0)\\u_2(0)\end{array}\right)=\left(\begin{array}{c}\frac{1}{\sqrt2}(\psi_{1, 0}+\psi_{2, 0})\\
\frac{1}{\sqrt2}(\psi_{1, 0}-\psi_{2, 0})\end{array}\right),
\end{equation*}
where $S_{max}$ is the maximal existence time for $U$. Then for any time $0<T<S_{max}$ 
\begin{itemize}
\item the solution $\Psi^\lambda$ exists in $[0, T]$ provided $|\lambda|$ is sufficiently large.
\item for any admissible pair $(q, r)$ we have
\begin{equation}\label{eq:psi_conv}
\lim_{|\lambda|\to\infty}\pare{\|\Psi^\lambda-\tilde{U}\|_{L^q_tL^r_x}+\|\nabla(\Psi^\lambda- \tilde{U})\|_{L^q_tL^r_x}
+\||\cdot|(\Psi^\lambda-\tilde{U})\|_{L^q_tL^r_x}}=0,
\end{equation}
where the asymptotics for $\Psi$ is given by
\begin{equation*}
\tilde{U}(t)=\left(\begin{array}{c}
\frac{1}{\sqrt2}e^{-i\lambda t}u_1(t)+\frac{1}{\sqrt2}e^{i\lambda t}u_2(t)\\
\frac{1}{\sqrt2}e^{-i\lambda t}u_1(t)-\frac{1}{\sqrt2}e^{i\lambda t}u_2(t)\end{array}\right),
\end{equation*}
The $L^q_tL^r_x$-norms above are taken on the $[0, T]\times\R^N$ space-time slab. In particular, convergence holds in 
$\cfun([0, T); \Sigma(\R^N))$.
\end{itemize}
\end{theorem}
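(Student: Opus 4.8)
The plan is to diagonalize the linear coupling $A$ and then remove the resulting fast oscillation, so that \eqref{eq:nlss} is recast as a single cubic system with time-periodic coefficients whose non-oscillating (resonant) part is precisely the limiting system \eqref{eq:u_sys_main}. Since $A$ has eigenvalues $\pm\lambda$ with orthonormal eigenvectors $\tfrac{1}{\sqrt2}(1,\pm1)$, I would introduce the real, symmetric, orthogonal matrix $P$ with $P=P^{-1}$ that diagonalizes $A$, set $W=P\Psi$, and then factor out the phases by $u_1=e^{i\lambda t}w_1$, $u_2=e^{-i\lambda t}w_2$. A direct computation shows the $\pm\lambda$ linear terms cancel, and $U^t=(u_1,u_2)$ solves
\begin{equation*}
i\partial_t U^\lambda=HU^\lambda+\mathcal N_\lambda(U^\lambda,t),
\end{equation*}
where $\mathcal N_\lambda$ is cubic in $(U^\lambda,\overline{U^\lambda})$ with coefficients that are trigonometric polynomials in $e^{\pm 2i\lambda t}$ and $e^{\pm 4i\lambda t}$. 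Reconstructing $\Psi$ from $U$ returns exactly $\tilde U$, and expanding the cubic terms while discarding the oscillating frequencies isolates the resonant nonlinearity with the effective constants \eqref{eq:chi}, identifying the limit equation as \eqref{eq:u_sys_main}. Because $P$ is constant and the phases are $x$-independent, this change of variables is a unitary map on $\Sigma(\R^N)$ that commutes with $\nabla$ and with multiplication by $|x|$ up to unimodular phases, so \eqref{eq:psi_conv} is equivalent to the same convergence for $U^\lambda-U$.

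Next I would set up a Strichartz/Duhamel comparison. Writing both $U^\lambda$ and $U$ through the propagator $S_0$ and subtracting, the difference $V=U^\lambda-U$ satisfies $V(t)=-i\int_0^t S_0(t-s)\big[(\mathcal N_\lambda(U^\lambda,s)-\mathcal N_\lambda(U,s))+(\mathcal N_\lambda(U,s)-\mathcal N_0(U))\big]\,ds$, where $\mathcal N_0$ is the resonant nonlinearity of \eqref{eq:u_sys_main}. The first bracket is cubic and locally Lipschitz in $V$, so by Proposition \ref{prop:strich} (and the commutation relations \eqref{eq:comm}, which let $\nabla$ and $|x|$ pass through $H$ and close among themselves) its contribution to $\|V\|+\|\nabla V\|+\||\cdot|V\|$ in the relevant Strichartz norms is controlled by the same quantity and absorbed by a continuity argument on finitely many short subintervals of $[0,T]$. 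Everything then reduces to showing that the purely oscillatory remainder $R_\lambda(s):=\mathcal N_\lambda(U,s)-\mathcal N_0(U)$, evaluated at the fixed limit solution, contributes a Strichartz term that vanishes as $|\lambda|\to\infty$.

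The heart of the matter, and the step I expect to be hardest, is this oscillatory estimate. Each term of $R_\lambda$ has the form $e^{\pm 2i\lambda s}g(s)$ or $e^{\pm 4i\lambda s}g(s)$ with $g$ cubic in $U$, so I would integrate by parts in $s$, using $e^{ik\lambda s}=\tfrac{1}{ik\lambda}\tfrac{d}{ds}e^{ik\lambda s}$. The boundary terms carry a factor $1/\lambda$ and are bounded in $L^q_tL^r_x$ by the homogeneous estimate of Proposition \ref{prop:strich}; the interior term is $\tfrac1\lambda\int_0^t S_0(t-s)\big[iHg+\partial_s g\big]\,ds$, bounded by the inhomogeneous estimate. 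The difficulty is that $iHg$ and, through the equation, $\partial_s g=O(U^2\partial_s U)=O(U^2 HU)$ both contain two spatial derivatives of the cubic expression, so the dual Strichartz norm is finite only if $U$ carries $H^2$-type regularity together with the weight $|x|^2$, which is not available for general $\Psi_0\in\Sigma(\R^N)$.

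I would overcome this by a density argument. For initial data in a dense subspace of $\Sigma(\R^N)$ on which the limit flow \eqref{eq:u_sys_main} propagates the extra regularity and decay, the integration by parts is licit and yields an explicit $O(1/|\lambda|)$ bound on the oscillatory remainder, hence convergence with a rate. For general $\Psi_0\in\Sigma(\R^N)$ I would then approximate by such data and pass to the limit using uniform-in-$\lambda$ Strichartz bounds for $U^\lambda$ together with continuous dependence, in a three-$\varepsilon$ argument. Finally, the existence of $\Psi^\lambda$ on all of $[0,T]$ for $|\lambda|$ large follows from the same comparison: $\Psi^\lambda$ and $U^\lambda$ share their maximal existence time, and the convergence estimate keeps the Strichartz norm of $U^\lambda$ close to that of $U$ on $[0,T]\subset[0,S_{max})$, so a standard continuation argument prevents blow-up before $T$.
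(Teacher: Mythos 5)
Your proposal follows the paper's overall architecture quite closely: your change of variables (diagonalize $A$ by $V=\frac{1}{\sqrt2}\bigl(\begin{smallmatrix}1&1\\1&-1\end{smallmatrix}\bigr)$ and strip the phases $e^{\mp i\lambda t}$) is exactly the transformation of Lemma \ref{lemma:transf}, your resonant/oscillatory splitting of the cubic terms reproduces $\hat B^\infty$ and $R^\lambda$ in \eqref{eq:b_infty}--\eqref{eq:R_lambda}, and your Duhamel comparison with absorption of the Lipschitz part plus iteration on short subintervals is the proof of Theorem \ref{thm:main} (Proposition \ref{lemma:14} together with the bootstrap on intervals of uniform length $\delta$). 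Where you genuinely depart from the paper is the key oscillatory estimate. The paper's Lemma \ref{lemma:weak_conv} is a purely \emph{linear} statement: by Strichartz, the map $f\mapsto\int_0^tS_0(t-s)e^{ia\lambda s}f(s)\,ds$ is bounded from $L^{\tilde q'}_tL^{\tilde r'}_x$ into $L^q_tL^r_x$ uniformly in $\lambda$, so it suffices to verify decay for $f\in\cfun^1([0,T];\mathcal S(\R^N))$, where the integration by parts you describe costs nothing; the density argument is applied to the \emph{inhomogeneity} itself, and no regularity of $U$ beyond $\Sigma(\R^N)$ is ever needed. You instead place the density argument at the level of the \emph{initial data}, which obliges you to supply two further nonlinear ingredients the paper never needs: persistence of $H^2$-plus-weight regularity under the limit flow \eqref{eq:u_sys_main}, and continuous dependence on the data for the $\lambda$-dependent flows with constants uniform in $\lambda$ --- and the latter must be interleaved with the existence bootstrap, since the uniform-in-$\lambda$ bounds invoked in your three-$\varepsilon$ step are themselves part of what is being proved. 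Both routes work; yours buys an explicit $O(1/|\lambda|)$ rate for regular data at the price of this extra machinery, while the paper's is shorter and needs only $\Sigma$ regularity. One point where your write-up is actually tighter than the paper: you split the nonlinear difference as $[\mathcal N_\lambda(U^\lambda)-\mathcal N_\lambda(U)]+[\mathcal N_\lambda(U)-\mathcal N_0(U)]$ so that the oscillatory estimate is applied to a $\lambda$-independent function, whereas the paper applies Lemma \ref{lemma:weak_conv} directly to $R^\lambda[\Phi^\lambda]\Phi^\lambda$, which depends on $\lambda$; your splitting is the standard way (as in Cazenave--Scialom) to make that step airtight.
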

\section{Local and Global Analysis of the system} \label{sect:LGWP}
In order to prove Theorem \ref{thm:main1}, we have to transform the system into a similar one, which is more suitable to study in the limit when $|\lambda|\to\infty$. We will then show that solutions of the transformed system will converge to solutions of \eqref{eq:u_sys_main}. All those systems are quite similar and at a local level they can be studied in the same way. For this reason here we write a general local well-posedness result, valid for all systems appearing in this paper. 
The result in the Proposition below is quite standard; its proof consists in just adapting the method introduced by Kato in 
\cite{Kato87} for NLS equations, to the case of systems and for Hamiltonians with a confining potential.
As it is already well explained in \cite{Caz} (see Chapter 4 and also Remark 3.3.12), the known theory for nonlinear 
Schr\"odinger equations is easily extended to systems. In \cite{Caz}, \cite{Kato87} the presence of a confining potential is not considered, however this is only a minor modification for the local well-posedness framework, thanks to Strichartz estimates stated in Proposition \ref{prop:strich} and the commutation relations \eqref{eq:comm}.
\begin{proposition}\label{prop:LWP}
Let us consider the following Cauchy problem
\begin{equation}\label{eq:nlss2}
\left\{\begin{array}{l}
i\d_t\Psi=-\frac12\Delta\Psi+\frac{\gamma^2}{2}|x|^2\Psi+\nlin(\Psi),\\
\Psi(0)=\Psi_0,
\end{array}\right.
\end{equation}
where the unknown $\Psi$ is a complex vector field. Let $\nlin\in\cfun(\C^2;\C^2)$ be such that $\nlin(0)=0$, and 
$\nlin(\Psi)=\nlin_1(\Psi)+\nlin_2(\Psi)$, where $\nlin_1, \nlin_2\in\cfun(\C^2;\C^2)$ satisfy
\begin{itemize}
\item $|\nlin_1(\Psi)-\nlin_1(\tilde\Psi)|\leq C|\Psi-\tilde\Psi|$;
\item $|\nlin_2(\Psi)-\nlin_2(\tilde\Psi)|\leq C(|\Psi|^2+|\tilde\Psi|^2)|\Psi-\tilde\Psi|$,
\end{itemize}
 for all $\Psi, \tilde\Psi\in\C^2$.
 \newline
For any $\Psi_0\in\Sigma(\R^N)$, there exist $\delta=\delta(\|\Psi_0\|_{\Sigma})>0$ and a unique solution
$\Psi\in\cfun([0, \delta];\Sigma(\R^N))$ to \eqref{eq:nlss2}. Moreover we have
\begin{equation}\label{eq:lin_pert}
\|\Psi\|_{L^\infty([0, \delta];\Sigma(\R^N))}\leq2C\|\Psi_0\|_{\Sigma(\R^N)}.
\end{equation}
Furthermore, the solution $\Psi$ can be extended to a maximal interval $[0, T_{max})$, and the blow-up alternative holds true, namely if $T_{max}<\infty$, then
$$\lim_{t\to T_{max}}\|\nabla\Psi(t)\|_{L^2(\R^N)}=\infty;$$
Finally, for any $0<T<T_{max}$ and any admissible pair $(q, r)$, we have $\Psi, \nabla\Psi,|\cdot|\Psi\in L^q([0, T];L^r(\R^N))$.
\end{proposition}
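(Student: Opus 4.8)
The plan is to run Kato's fixed-point scheme on the Duhamel formulation
\[
\Phi(\Psi)(t)=S_0(t)\Psi_0-i\int_0^t S_0(t-s)\nlin(\Psi(s))\,ds,
\]
working directly in the energy space. First I would fix a finite family of admissible pairs $(q,r)$ adapted to the cubic growth of $\nlin_2$ (for $N\le 3$ one takes $r$ slightly below the $H^1$-critical exponent), and introduce the norm
\[
\|\Psi\|_{Y_T}:=\|\Psi\|_{L^\infty_t\Sigma}+\|\Psi\|_{L^q_tL^r_x}+\|\na\Psi\|_{L^q_tL^r_x}+\||\cdot|\Psi\|_{L^q_tL^r_x}
\]
(maximized over the chosen pairs). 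For $T>0$ and $M>0$ I would work in the ball $\{\Psi:\|\Psi\|_{Y_T}\le M\}$ inside $\cfun([0,T];\Sigma(\R^N))$, equipped with the weaker distance $d(\Psi,\tilde\Psi)=\|\Psi-\tilde\Psi\|_{L^q_tL^r_x}$, which renders the ball a complete metric space.

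The presence of the confining potential is the only genuine departure from the potential-free theory of \cite{Kato87,Caz}, and it is absorbed through the commutation relations \eqref{eq:comm}. Since $[\na,H]=\gamma^2 x$ and $[x,H]=\na$, the operators $\na$ and $x$ are transported by the flow $S_0(t)=e^{-itH}$ into fixed linear combinations of $\na$ and $\gamma x$ whose coefficients stay bounded on the compact interval $[0,T]$; in particular $\|S_0(t)\Psi_0\|_{\Sigma}\le C\|\Psi_0\|_{\Sigma}$. Consequently, applying $\na$ and $x$ to $\Phi(\Psi)$ and commuting them past $S_0(t-s)$ produces, besides $S_0(t-s)\na\nlin(\Psi(s))$ and $S_0(t-s)\,x\,\nlin(\Psi(s))$, only lower-order terms of the same structure, to each of which the inhomogeneous estimate of Proposition \ref{prop:strich} applies. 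Because $x$ is a pointwise multiplier and $\nlin$ is local in $x$, the hypotheses give $\||\cdot|\nlin_2(\Psi)|\le C|\Psi|^2\,||\cdot|\Psi|$ and $||\cdot|\nlin_1(\Psi)|\le C\,||\cdot|\Psi|$, while the chain rule yields $|\na\nlin(\Psi)|\le C(1+|\Psi|^2)|\na\Psi|$; thus all three components $\Psi,\na\Psi,|\cdot|\Psi$ satisfy nonlinear bounds of identical form.

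Combining these with Proposition \ref{prop:strich} and H\"older in space-time, the $\nlin_1$ contribution is controlled by $C\,T\,\|\Psi\|_{Y_T}$ and the $\nlin_2$ contribution by $C\,T^{\theta}\|\Psi\|_{Y_T}^{3}$ for some $\theta>0$, both gaining a positive power of $T$; the analogous differences are bounded by $C\,T\,d(\Psi,\tilde\Psi)$ and $C\,T^{\theta}M^{2}d(\Psi,\tilde\Psi)$. Choosing $M=2C\|\Psi_0\|_{\Sigma}$ and then $\delta=\delta(\|\Psi_0\|_{\Sigma})$ small enough, $\Phi$ maps the ball into itself and is a contraction for $d$; its unique fixed point is the desired solution. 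The self-mapping bound with this choice of $M$ gives \eqref{eq:lin_pert}, uniqueness in all of $\cfun([0,\delta];\Sigma)$ follows from the contraction estimate on subintervals by a standard connectedness argument, and the Strichartz integrability of $\Psi,\na\Psi,|\cdot|\Psi$ over $[0,T]$ is built into $Y_T$.

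Finally, the maximal interval $[0,T_{max})$ is obtained by iterating the local step, whose length depends only on $\|\Psi(t)\|_{\Sigma}$. To upgrade the blow-up alternative to a statement about $\|\na\Psi\|_{L^2}$ alone, I would differentiate the remaining two components of the $\Sigma$ norm: for $\|\Psi(t)\|_{L^2}^2$ the self-adjoint part of $H$ drops out, leaving $2\im\langle\Psi,\nlin(\Psi)\rangle$, which by the growth hypotheses and Gagliardo--Nirenberg (to absorb the quartic term for $N\le 3$) is bounded in terms of $\|\na\Psi\|_{L^2}$ and $\|\Psi\|_{L^2}$; for $\||\cdot|\Psi(t)\|_{L^2}^2$ an integration by parts reduces the kinetic contribution to $\im\int x\cdot\na\Psi\,\Psi^*$, controlled by $\|\na\Psi\|_{L^2}\||\cdot|\Psi\|_{L^2}$, and the nonlinear part is handled similarly. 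A Gronwall argument then shows that neither $\|\Psi\|_{L^2}$ nor $\||\cdot|\Psi\|_{L^2}$ can blow up while $\|\na\Psi\|_{L^2}$ stays bounded, so $T_{max}<\infty$ forces $\|\na\Psi(t)\|_{L^2}\to\infty$. I expect the main technical point to be precisely this propagation of the differentiated and weighted norms through the potential: the commutators mix $\na$ and $x$ along the flow, so one must track both simultaneously, although on compact time intervals the mixing coefficients remain bounded and the argument closes.
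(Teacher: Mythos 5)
Your proposal is correct and follows essentially the same route as the paper: Kato's fixed-point scheme on the Duhamel formulation, using the Strichartz estimates of Proposition \ref{prop:strich} and the commutation relations \eqref{eq:comm} to propagate $\na$ and $|\cdot|$, a contraction in a ball of Strichartz-type spaces under a weaker metric, and the choice $M=2C\|\Psi_0\|_{\Sigma}$ yielding \eqref{eq:lin_pert}. The only notable differences are minor: the paper works with the single admissible pair $(8/N,4)$ rather than a family of near-critical pairs, and your Gronwall argument reducing the blow-up alternative to the gradient norm alone is spelled out in more detail than in the paper, which simply asserts this reduction from the fixed-point argument.
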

\begin{proof}
Let us define the space
\begin{equation*}
\begin{aligned}
K:=\{\Psi\;\textrm{s.t.}\;&\Psi,\nabla\Psi,|\cdot|\Psi\in L^\infty_tL^2_x([0, \delta]\times\R^N))\cap 
L^{8/N}_tL^4_x([0, \delta]\times\R^N)),\\
&\|\Psi\|_{L^\infty([0, \delta];\Sigma(\R^N))}
+\|\left(\begin{array}{c}1\\\nabla\\|\cdot|\end{array}\right)\Psi\|_{L^{8/N}_tL^4_x([0, \delta]\times\R^N)}\leq M\},
\end{aligned}
\end{equation*}
where $M, \delta>0$ will be chosen later, endowed with the distance
\begin{equation*}
d(\Psi, \tilde\Psi):=\|\Psi-\tilde\Psi\|_{L^\infty_tL^2_x([0, \delta]\times\R^N)}
+\|\Psi-\tilde\Psi\|_{L^{8/N}_tL^4_x([0, \delta]\times\R^N)}.
\end{equation*}
By the hypotheses on $\nlin(\Psi)$ we have
\begin{equation*}
\begin{aligned}
\|\nlin_1(\Psi)-\nlin_1(\tilde\Psi)\|_{L^2(\R^N)}\leq &C\|\Psi-\tilde\Psi\|_{L^2(\R^N)}\\
\|\nlin_2(\Psi)-\nlin_2(\tilde\Psi)\|_{L^{4/3}(\R^N)}\leq &C(\|\Psi\|_{L^4(\R^N)}^2+\|\tilde\Psi\|_{L^4(\R^N)}^2)\|\Psi-\tilde\Psi\|_{L^4(\R^N)}\\
\|\left(\begin{array}{c}1\\\nabla\\|\cdot|\end{array}\right)\nlin_1(\Psi)\|_{L^2(\R^N)}\leq &C
\|\left(\begin{array}{c}1\\\nabla\\|\cdot|\end{array}\right)\Psi\|_{L^2(\R^N)}\\
\|\left(\begin{array}{c}1\\\nabla\\|\cdot|\end{array}\right)\nlin_2(\Psi)\|_{L^{4/3}(\R^N)}
\leq &C\|\Psi\|_{L^4(\R^N)}^2\|\left(\begin{array}{c}1\\\nabla\\|\cdot|\end{array}\right)\Psi\|_{L^4(\R^N)}.
\end{aligned}
\end{equation*}
Let us now consider the operator
\begin{equation*}
G[\Psi]:=S_0(t)\Psi_0-i\int_0^tS_0(t-s)\nlin(\Psi)(s)ds.
\end{equation*}
By using the commutation rules for the Hamiltonian $H$ and Strichartz estimates, we have on the space-time slab $[0, \delta]\times\R^N$
\begin{equation*}
\begin{aligned}
\|G[\Psi]\|_{L^\infty_t\Sigma_x}+&
\|\left(\begin{array}{c}1\\\nabla\\|\cdot|\end{array}\right)G[\Psi]\|_{L^{8/N}_tL^4_x}\\
&\leq C\|\Psi_0\|_{\Sigma(\R^N)}
+C\|\left(\begin{array}{c}1\\\nabla\\|\cdot|\end{array}\right)\nlin_1(\Psi)\|_{L^1_tL^2_x}
+C\|\left(\begin{array}{c}1\\\nabla\\|\cdot|\end{array}\right)\nlin_2(\Psi)\|_{L^{\frac{8}{8-N}}_tL^{4/3}_x}\\
&\leq C\|\Psi_0\|_{\Sigma(\R^N)}
+C\|\left(\begin{array}{c}1\\\nabla\\|\cdot|\end{array}\right)\Psi\|_{L^1_tL^2_x}
+C\|\Psi\|_{L^\infty_tL^4_x}^2
\|\left(\begin{array}{c}1\\\nabla\\|\cdot|\end{array}\right)\Psi\|_{L^{\frac{8}{8-N}}_tL^4_x}.
\end{aligned}
\end{equation*}
Analogously we obtain
\begin{equation*}
\begin{aligned}
\|G[\Psi]-G[\tilde\Psi]\|_{L^\infty_tL^2_x}+&\|G[\Psi]-G[\tilde\Psi]\|_{L^{8/N}_tL^4_x}
\leq C\|\nlin_1(\Psi)-\nlin_1(\tilde\Psi)\|_{L^1_tL^2_x}+C\|\nlin_2(\Psi)-\nlin_2(\tilde\Psi)\|_{L^{\frac{8}{8-N}}_tL^{4/3}_x}\\
&\leq C\|\Psi-\tilde\Psi\|_{L^1_tL^2_x}+C(\|\Psi\|_{L^\infty_tL^4_x}^2+\|\tilde\Psi\|_{L^\infty_tL^4_x}^2)
\|\Psi-\tilde\Psi\|_{L^{\frac{8}{8-N}}_tL^4_x}.
\end{aligned}
\end{equation*}
By the Sobolev embedding $H^1\hookrightarrow L^4$ and by using H\"older's inequality in time in the previous expressions, we have
\begin{equation*}
\begin{aligned}
\|G[\Psi]\|_{L^\infty_t\Sigma_x}
+\|\left(\begin{array}{c}1\\\nabla\\|\cdot|\end{array}\right)G[\Psi]\|_{L^{8/N}_tL^4_x}
\leq &C\|\Psi_0\|_{\Sigma}
+C(\delta+\delta^{\frac{8-2N}{8}}M^2)M\\
\|G[\Psi_1]-G[\Psi_2]\|_{L^{8/N}_tL^4_x}\leq &
C(\delta+\delta^{\frac{8-2N}{8}}M^2)d(\Psi, \tilde\Psi).
\end{aligned}
\end{equation*}
Now, we choose $M, \delta$ such that
\begin{equation*}
\begin{aligned}
C\|\Psi_0\|_{\Sigma(\R^N)}=&\frac{M}{2}\\
C(\delta+\delta^{\frac{8-2N}{8}}M^2)\leq&\frac12,
\end{aligned}
\end{equation*}
so that we have
\begin{equation}\label{eq:21}
\begin{aligned}
&\|G[\Psi]\|_{L^\infty_t\Sigma_x}
+\|\left(\begin{array}{c}1\\\nabla\\|\cdot|\end{array}\right)G[\Psi]\|_{L^{8/N}_tL^4_x}\leq M\\
&d(G[\Psi], G[\tilde\Psi])\leq\frac12d(\Psi, \tilde\Psi).
\end{aligned}
\end{equation}
This implies $G$ is a contraction in $K$, therefore there exists a unique $\Psi\in K$ such that
\begin{equation*}
\Psi(t)=G[\Psi](t)=S_0(t)\Psi_0-i\int_0^tS_0(t-s)\nlin(\Psi)(s)ds.
\end{equation*}
Hence $\Psi\in\cfun([0, \delta];\Sigma(\R^N))\cap L^{8/N}([0, \delta];L^4(\R^N))$ is a solution to \eqref{eq:nlss2} in $[0, \delta]$.
From \eqref{eq:21} we also see
\begin{equation*}
\|\Psi\|_{L^\infty([0, \delta];\Sigma(\R^N))}\leq M=2C\|\Psi_0\|_{\Sigma(\R^N)},
\end{equation*}
which proves \eqref{eq:lin_pert}. Analogously, by Strichartz estimates we also have
\begin{equation*}
\|\left(\begin{array}{c}1\\\nabla\\|\cdot|\end{array}\right)\Psi\|_{L^q_tL^r_x([0, \delta]\times\R^N)}\leq 
C\|\Psi_0\|_{\Sigma(\R^N)}+C\delta^{\frac{8-2N}{N}}(1+M^2)M\leq M.
\end{equation*}
Furthermore, from the proof in the fixed point argument above we also infer that we may extend the solution as long as the 
$L^2-$norm of the gradient of the solution remains bounded, hence the blow-up alternative holds true. This implies we can extend the solution to a maximal interval $[0, T_{max})$, and moreover for any $0<T<T_{max}$, $(q, r)$ admissible pair, we have
\begin{equation*}
\|\left(\begin{array}{c}1\\\nabla\\|\cdot|\end{array}\right)\Psi\|_{L^q_tL^r_x([0, T]\times\R^N)}<\infty.
\end{equation*}
\end{proof}
\begin{corollary}\label{cor:lwp1}
The system \eqref{eq:nlss} is locally well-posed in $\Sigma(\R^N)$. 
\end{corollary}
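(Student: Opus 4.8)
The plan is to read Corollary \ref{cor:lwp1} off directly from Proposition \ref{prop:LWP}: the entire task reduces to checking that the nonlinearity of \eqref{eq:nlss} decomposes into a part obeying the Lipschitz hypothesis and a part obeying the cubic hypothesis required there. From \eqref{eq:nlss} the nonlinear term is $\nlin(\Psi)=\tilde B[\Psi]\Psi+A\Psi$, and the natural splitting is to take $\nlin_1(\Psi):=A\Psi$ for the linear coupling and $\nlin_2(\Psi):=\tilde B[\Psi]\Psi$ for the cubic self- and cross-interaction. Both are smooth maps $\C^2\to\C^2$, and $\nlin(0)=0$ since $A\cdot 0=0$ and $\tilde B[0]=0$.

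For $\nlin_1$ the verification is immediate: as $A$ is the constant matrix \eqref{eq:A}, the map is linear, so
\[
|\nlin_1(\Psi)-\nlin_1(\tilde\Psi)|=|A(\Psi-\tilde\Psi)|\leq|\lambda|\,|\Psi-\tilde\Psi|,
\]
which is the first hypothesis, with the constant chosen larger than $|\lambda|$ (and than $1$).

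The only mildly substantive step is the cubic estimate for $\nlin_2$. Expanding $\tilde B[\Psi]\Psi$ via \eqref{eq:B_tilde}, each component is a homogeneous polynomial of degree three in $\psi_1,\psi_2$ and their conjugates; for instance the first component is $\beta_{11}|\psi_1|^2\psi_1+\beta_{12}|\psi_2|^2\psi_1$. For any homogeneous cubic $P$ I would apply the fundamental theorem of calculus to $s\mapsto P(\tilde\Psi+s(\Psi-\tilde\Psi))$ and use that its differential $DP$ is quadratically bounded, together with $|\tilde\Psi+s(\Psi-\tilde\Psi)|\leq\max(|\Psi|,|\tilde\Psi|)$ for $s\in[0,1]$, to obtain
\[
|P(\Psi)-P(\tilde\Psi)|\leq C\big(|\Psi|^2+|\tilde\Psi|^2\big)|\Psi-\tilde\Psi|.
\]
Summing over the finitely many monomials making up the two components of $\tilde B[\Psi]\Psi$ yields exactly the second hypothesis for $\nlin_2$.

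With both structural conditions in place, Proposition \ref{prop:LWP} applies and delivers local well-posedness in $\Sigma(\R^N)$, along with the blow-up alternative and the Strichartz integrability of $\Psi,\nabla\Psi$ and $|\cdot|\Psi$. I do not anticipate a genuine obstacle: the content lies wholly in the two elementary inequalities above, and the cubic one — the only point requiring a line of computation — is entirely standard for polynomial nonlinearities.
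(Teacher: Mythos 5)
Your proposal is correct and is exactly the argument the paper intends: Corollary \ref{cor:lwp1} follows by applying Proposition \ref{prop:LWP} with the splitting $\nlin_1(\Psi)=A\Psi$ (linear, hence Lipschitz with constant $|\lambda|$) and $\nlin_2(\Psi)=\tilde B[\Psi]\Psi$ (cubic polynomial, hence satisfying the quadratic-times-difference bound). The paper leaves this verification implicit, and your two elementary estimates fill it in correctly.
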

A natural question which arises at this point is to see whether the solution in Corollary above is global or there is possible occurrence of blow-up in finite time.
\newline
For the case of a single cubic nonlinear Schr\"odinger equation the picture is complete.
\begin{itemize}
\item If the nonlinearity is defocusing (i.e. its coefficient is positive) then, by using the conservation of energy and the consequent uniform a priori bound on the $H^1-$norm of the solution, global well-posedness holds true.
\item For $N=3$, if the nonlinearity is focusing (i.e. its coefficient is negative), then there exist initial data for which the 
$L^2-$norm of the gradient of the solution blows up in finite time \cite{G}.
\item In the case $N=2$ and focusing nonlinearity, we have
\begin{itemize}
\item if $\|\psi_0\|_{L^2}<\|Q\|_{L^2}$, where $Q$ is the unique positive radial solution to $\Delta Q+Q^3-Q=0$, then the solution exists globally \cite{Weinstein};
\item for initial data $\|\psi_0\|_{L^2}\ge\|Q\|_{L^2}$ there is possible occurrence of blow-up.
\end{itemize}
\end{itemize}
In the case of system \eqref{eq:nlss1} the picture is more complex since each of the nonlinearity can induce blow-up. Here we state some known results.
\begin{theorem}[\cite{Juengel11}]\label{thm:GWP1}
Let $N\le 3$ and set $\beta=\max\{2(-\beta_{11})^+,2(-\beta_{22})^+,(-\beta_{12})^+\}$. 
Then there exists a global-in-time solution to 
in the following cases:
\begin{enumerate}
\item  $\beta_{11},\beta_{22},\beta_{12}\ge 0$ or $\beta_{12}^2<  \beta_{11}\beta_{22}$ with $\beta_{11}\geq0$, $\beta_{12}<0$ 
 \item $N=1$
 \item $N=2$ and $M(0) < 2/(C_{2}\beta)$, if $\min\{\beta_{11},\beta_{22},\beta_{12}\}<0$
  \item $N=3$, $ \|\na\Psi(0)\|_2^2 \le 2(E(0)+|\lambda|M(0))$, and $M(0)(E(0)+|\lambda|M(0)) < \frac{8}{27C_{3}^2\beta^2}$, if $\min\{\beta_{11},\beta_{22},\beta_{12}\}<0$
\end{enumerate}
\end{theorem}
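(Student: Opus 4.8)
The plan is to reduce everything to an a priori bound on the kinetic energy via the blow-up alternative of Proposition \ref{prop:LWP}: since the local solution extends as long as $\|\nabla\Psi(t)\|_{L^2}$ stays finite, it suffices to bound $\|\nabla\Psi(t)\|_{L^2}$ on every finite interval using only the conserved mass $M$ and energy $E$. Solving the energy \eqref{eq:nlss_en2} for the kinetic term and using that the confining contribution $\frac{\gamma^2}{2}\||x|\Psi\|_{L^2}^2$ is nonnegative, I would first write
\begin{equation*}
\tfrac12\|\nabla\Psi\|_{L^2}^2 \le E - \tfrac12\int_{\R^N}\Psi^*\tilde B[\Psi]\Psi\,dx - 2\lambda\int_{\R^N}\re(\psi_1^*\psi_2)\,dx .
\end{equation*}
The linear-coupling term is controlled by $\big|2\lambda\int\re(\psi_1^*\psi_2)\big|\le|\lambda|M$, which is exactly why $E+|\lambda|M$, and not $E$ alone, is the relevant quantity throughout the statement.

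Next I would treat the nonlinear term, using $\Psi^*\tilde B[\Psi]\Psi=\beta_{11}|\psi_1|^4+2\beta_{12}|\psi_1|^2|\psi_2|^2+\beta_{22}|\psi_2|^4$. In Case (1) the matrix of coefficients is positive semidefinite (all entries nonnegative, or $\beta_{11}\ge0$ with $\beta_{11}\beta_{22}-\beta_{12}^2>0$), so this integrand is pointwise nonnegative and may be dropped, giving the unconditional bound $\tfrac12\|\nabla\Psi\|_{L^2}^2\le E+|\lambda|M$ in every dimension. In the focusing cases I would instead keep only the negative part and estimate it, via $2|\psi_1|^2|\psi_2|^2\le\tfrac12|\Psi|^4$ and $|\psi_j|^4\le|\Psi|^4$, by $C\beta\int_{\R^N}|\Psi|^4\,dx$ with $\beta$ as defined; the exact split is what fixes the constant in $\beta$. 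Then I would invoke Gagliardo--Nirenberg applied to the scalar $|\Psi|$ (using the diamagnetic bound $\|\nabla|\Psi|\|_{L^2}\le\|\nabla\Psi\|_{L^2}$),
\begin{equation*}
\int_{\R^N}|\Psi|^4\,dx \le C_N\,\|\nabla\Psi\|_{L^2}^N\,M^{\frac{4-N}{2}},\qquad N=1,2,3 .
\end{equation*}

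Writing $y=\|\nabla\Psi\|_{L^2}$, the combination of the two displays yields $\tfrac12 y^2\le(E+|\lambda|M)+C\beta C_N M^{(4-N)/2}y^N$, and the three focusing cases differ only through the power $y^N$. For $N=1$ the exponent is subcritical, so the quadratic-type inequality $\tfrac12 y^2-c\,y\le E+|\lambda|M$ pins $y$ below the larger root of a polynomial depending only on conserved quantities, giving the unconditional Case (2). For $N=2$ the problem is mass-critical, $y^N=y^2$, and the inequality becomes $\big(\tfrac12-c M\big)y^2\le E+|\lambda|M$, which is useful precisely when the bracket is positive, i.e. under the smallness condition $M(0)<2/(C_2\beta)$ of Case (3), in exact analogy with the scalar threshold $\|Q\|_{L^2}$. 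For $N=3$ the problem is mass-supercritical, $y^N=y^3$, and the function $f(y)=\tfrac12 y^2-a y^3$ with $a\sim\beta C_3 M^{1/2}$ is no longer invertible: it rises to a maximum at $y_*=1/(3a)$ and then decreases.

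The \emph{main obstacle} is this last case, where the inequality $f(y(t))\le E+|\lambda|M$ cannot simply be solved for $y$. Here I would run a continuity/bootstrap argument. The first hypothesis of Case (4), $\|\nabla\Psi(0)\|_{L^2}^2\le2(E(0)+|\lambda|M(0))$, together with the second, $M(0)(E(0)+|\lambda|M(0))<8/(27C_3^2\beta^2)$, forces the initial gradient onto the increasing branch $y(0)<y_*$ while guaranteeing that the conserved level $E+|\lambda|M$ lies strictly below the barrier height $f(y_*)$. Since $t\mapsto y(t)$ is continuous and always satisfies $f(y(t))\le E+|\lambda|M<f(y_*)$, it can never reach $y_*$, so it stays trapped on the low branch, yielding a uniform bound and global existence. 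The only delicate points are computing $f(y_*)$ and the diamagnetic and Gagliardo--Nirenberg constants so that the thresholds $2/(C_2\beta)$ and $8/(27C_3^2\beta^2)$ emerge with the stated sharp form, and making the continuity argument rigorous through the blow-up alternative of Proposition \ref{prop:LWP}.
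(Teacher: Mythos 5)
Your strategy is sound, and on the only case the paper actually proves it coincides with the paper's argument verbatim: the paper treats the positive-definite sub-case of (1) (namely $\beta_{12}^2<\beta_{11}\beta_{22}$, $\beta_{11}>0$) by observing that the quartic term $\tfrac12\int\Psi^*\tilde B[\Psi]\Psi\,dx$ is nonnegative, dropping it from the conserved energy \eqref{eq:nlss_en2}, and bounding the coupling term by $\big|2\lambda\int\Re(\psi_1^*\psi_2)\big|\le|\lambda|M$ to get $\tfrac12\|\nabla\Psi(t)\|_{L^2}^2+\tfrac{\gamma^2}{2}\||\cdot|\Psi(t)\|_{L^2}^2\le E(0)+|\lambda|M(0)$, exactly your Case (1). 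For cases (2)--(4) the paper gives no proof at all and simply refers to \cite{Juengel11}; what you do differently is reconstruct those cases from scratch, via the pointwise bound of the focusing part by $C\beta|\Psi|^4$, the Gagliardo--Nirenberg interpolation $\int|\Psi|^4\,dx\lesssim\|\nabla\Psi\|_{L^2}^{N}M^{(4-N)/2}$, and the subcritical/critical/supercritical trichotomy, with the barrier--continuity argument for $N=3$. That argument is correct as you set it up: with $f(y)=\tfrac12y^2-ay^3$ one has $f(y_*)=\tfrac{1}{54a^2}$ and $y_*^2=6f(y_*)$, so the hypothesis $y(0)^2\le2(E(0)+|\lambda|M(0))<2f(y_*)$ does place $y(0)$ strictly on the increasing branch, and conservation of $E+|\lambda|M$ plus continuity of $t\mapsto\|\nabla\Psi(t)\|_{L^2}$ traps $y(t)$ there, so global existence follows from the blow-up alternative of Proposition \ref{prop:LWP}. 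This makes your proposal more self-contained than the paper (it is essentially the proof of the cited reference). Two caveats, neither structural: first, ``all entries nonnegative'' does not make the coefficient matrix positive semidefinite (e.g.\ zero diagonal with $\beta_{12}>0$), but this is harmless since the form is evaluated on the nonnegative arguments $(|\psi_1|^2,|\psi_2|^2)$, which is all that is needed; second, the sharp thresholds $2/(C_2\beta)$ and $8/(27C_3^2\beta^2)$ hinge on the normalization of $C_2,C_3$, which this paper never defines --- the crude split you indicate ($|\psi_j|^4\le|\Psi|^4$, $2|\psi_1|^2|\psi_2|^2\le\tfrac12|\Psi|^4$) yields $-\tfrac12\Psi^*\tilde B[\Psi]\Psi\le\tfrac{\beta}{2}|\Psi|^4$ and hence thresholds off by a constant factor, so recovering the stated constants requires the finer bookkeeping behind the weights $2(-\beta_{11})^+,2(-\beta_{22})^+,(-\beta_{12})^+$ in the definition of $\beta$, a point you flag but do not carry out.
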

\begin{proof}
We show global existence for $\beta_{12}^2<  \beta_{11}\beta_{22}$ with $\beta_{11}>0$. In this case indeed the matrix
\begin{equation*}
B=\left(\begin{array}{cc}\beta_{11}&\beta_{12}\\\beta_{12}&\beta_{22}\end{array}\right)
\end{equation*}
is positive definite, it follows that 
 \begin{equation*}
 \int_{\R^N}\big(\frac{\beta_{11}}{2}|\psi_2|^4+\beta_{12}|\psi_1|^2|\psi_2|^2+\frac{\beta_{22}}{2}|\psi_2|^2\big)(x,t)dx \geq 0,
 \end{equation*}
thus using the energy conservation we get uniform bounds on the $\Sigma$-norm of the solution $\Psi$.
 \begin{equation*}
 \frac{1}{2}\|\nabla\Psi(t)\|_{L^2}^2+\frac{\gamma^2}{2}\||\cdot|\Psi\|_{L^2}^2
 \leq E(t)-2\lambda\int_{\R^N}\Re(\psi_1^*\psi_2)(x,t)dx
 \leq E(t)+|\lambda| M(t)=E(0)+|\lambda| M(0).
 \end{equation*}
 For all other cases see \cite{Juengel11}.
\end{proof}
In the next Theorem we show the cases in which there is possible occurrence of blow-up in finite time. We describe two situations, the first one in which the nonlinearity is negative definite. In this case we apply a method introduced by Carles \cite{Car} for focusing NLS equations with a confining potential, by using a modified energy functional (for more discussions about this modified energy and its interpretation see \cite{Car}). In the second case we assume that at least one of the coefficients $\beta_{ij}$'s is focusing and that the energy is negative enough so that conditions 
(ii) or (iii) below are fulfilled. In this case we apply the method by Glassey \cite{G} using virial identities.
\begin{remark}
Note that the following sufficient condition for blow-up is valid also in the supercritical case $N=3$, whereas in \cite{Juengel11} the authors consider only the mass-critical case $N=2$.
\end{remark}
\begin{theorem}\label{thm:blow-up}
Let $\Psi\in\cfun([0, T_{max});\Sigma(\R^N))$ be the solution to \eqref{eq:nlss} as in Corollary \ref{cor:lwp1},  and let us define the virial potential 
\begin{equation*}
I(t)=\int_{\R^N}|x|^2|\Psi(x,t)|^2dx.
\end{equation*}
Let us assume $N\geq2$ and one of the following conditions is satisfied
\begin{itemize}
\item[(i)] the nonlinearity is \emph{negative definite}, i.e. we either have $\beta_{12}^2-\beta_{11}\beta_{22}<0$ with 
$\beta_{11}<0$ and  $\beta_{12}\geq 0$ or $\beta_{11}, \beta_{12}, \beta_{22}<0$, and we also assume that
\begin{equation*}
E(0)+\frac{|\lambda|}{2}M(0)<\frac{\gamma^2}{2}I(0);
\end{equation*}
\item[(ii)] $\min\{\beta_{11},\beta_{22},\beta_{12}\}<0$
$$\frac{2N}{N+2}\pare{E(0)+|\lambda|M(0)}<\frac{\gamma^2}{2}I(0);$$
\item[(iii)] $\min\{\beta_{11},\beta_{22},\beta_{12}\}<0$; 
$I'(0)<0$ and $$\frac{2N}{N+2}\pare{E(0)+|\lambda|M(0)}<-\frac{\gamma}{\sqrt{2+N}}I'(0).$$
\end{itemize}
Then the solution blows-up at a finite time, i.e. $\exists\;0<T^*<\infty$, such that
\begin{equation*}
\lim_{t\to T^*}\|\nabla\Psi(t)\|_{L^2}=\infty.
\end{equation*}
\end{theorem}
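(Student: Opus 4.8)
The plan is to use the virial (variance) method of Glassey, adapted to the harmonic trap following Carles. First I would compute the first two time derivatives of $I(t)$. Since $\tilde B[\Psi]$ and $A$ are Hermitian (the $\beta_{jk}$ and $\lambda$ being real), the quantities $\Psi^*\tilde B[\Psi]\Psi$ and $\Psi^*A\Psi=2\lambda\Re(\psi_1^*\psi_2)$ are real, so they carry no imaginary part and drop out of
\begin{equation*}
I'(t)=2\,\Im\int_{\R^N}\Psi^*(x\cdot\nabla\Psi)\,dx,
\end{equation*}
which therefore only sees the kinetic term. Differentiating once more and integrating by parts (the potential $\frac{\gamma^2}{2}|x|^2$ producing the term $-\int x\cdot\nabla V\,|\Psi|^2=-\gamma^2I$, and the $x$-independent coupling $A$ contributing nothing, since its virial integrand is a total derivative $x\cdot\nabla(\Psi^*A\Psi)$), I expect
\begin{equation*}
I''(t)=2\|\nabla\Psi(t)\|_{L^2}^2-2\gamma^2 I(t)+N\int_{\R^N}\Psi^*\tilde B[\Psi]\Psi\,dx.
\end{equation*}
Rigorously this identity should be justified by approximation (truncating the weight $|x|^2$, or regularizing the data and using the $\Sigma$-bounds from Proposition \ref{prop:LWP}); I would treat this as a routine technical step.

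For case (i) the matrix $B$ is negative definite, hence $\Psi^*\tilde B[\Psi]\Psi\le 0$ pointwise. Keeping this sign and eliminating the gradient through the energy \eqref{eq:nlss_en2} (using $N\ge 2$ to discard the unfavourable part of the nonlinear term, $(N-2)\int\Psi^*\tilde B[\Psi]\Psi\le 0$), together with the bound $|\int\Re(\psi_1^*\psi_2)|\le\frac12 M$, leads to a differential inequality of the form $I''(t)+4\gamma^2 I(t)\le 4E(0)+C|\lambda|M(0)$. The role of Carles' modified energy functional is precisely to organize the kinetic, nonlinear and coupling contributions so as to absorb the linear coupling with the sharp constant; comparing $I$ with the forced harmonic oscillator $z''+4\gamma^2 z=\mathrm{const}$ and invoking $E(0)+\frac{|\lambda|}{2}M(0)<\frac{\gamma^2}{2}I(0)$ then forces the oscillatory upper envelope of $I$ to reach $0$ at a finite time.

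For cases (ii) and (iii) I would instead eliminate the nonlinear term $N\int\Psi^*\tilde B[\Psi]\Psi$ \emph{entirely} via the energy identity; using $N\ge 2$ to drop the resulting $(2-N)\|\nabla\Psi\|_{L^2}^2\le 0$ and bounding the coupling by $\frac12 M$ yields the single inequality
\begin{equation*}
I''(t)+(N+2)\gamma^2 I(t)\le 2N\pare{E(0)+|\lambda|M(0)}.
\end{equation*}
Comparison with the oscillator $z''+(N+2)\gamma^2 z=2N(E(0)+|\lambda|M(0))$, of frequency $\sqrt{N+2}\,\gamma$, then gives blow-up: hypothesis (ii) is exactly what makes the cosine-amplitude of the envelope dip below zero, while hypothesis (iii), together with $I'(0)<0$, makes the sine-amplitude do so (this is where the constants $\tfrac{2N}{N+2}$ and $\tfrac{\gamma}{\sqrt{2+N}}$ come from). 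In either case $I$ must vanish at some finite $t^*$.

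Finally, since $I(t)\ge 0$ by definition, the conclusion $I(t^*)=0$ at a finite $t^*$ is incompatible with the solution persisting in $\Sigma(\R^N)$; by the blow-up alternative of Proposition \ref{prop:LWP} this forces $\|\nabla\Psi(t)\|_{L^2}\to\infty$ at some $T^*\le t^*$, which is the assertion. \emph{The main obstacle} is exactly the harmonic restoring term $-\gamma^2 I$ (respectively $-(N+2)\gamma^2 I$): unlike the free case, where $I''\le\mathrm{const}$ would immediately push $I$ to $-\infty$, here $I$ only satisfies a \emph{forced-oscillator} inequality, so finite-time vanishing is not automatic and must be extracted from a careful analysis of when the oscillatory envelope first crosses zero. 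This is precisely where the modified-energy bookkeeping of Carles and the sharp tracking of the constants in (i)--(iii) are needed; the secondary difficulty is the rigorous justification of the virial identity for $\Sigma$-solutions.
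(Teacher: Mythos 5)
Your treatment of cases (ii) and (iii) coincides with the paper's proof: the same virial identities, the same elimination of the nonlinear term by writing $I''(t)=2NE(t)+(2-N)\|\nabla\Psi(t)\|_{L^2}^2-(2+N)\gamma^2I(t)-4N\lambda\int\Re(\psi_1^*\psi_2)\,dx$, the same discard of $(2-N)\|\nabla\Psi\|_{L^2}^2\le0$ for $N\ge2$, the same bound of the coupling by the conserved mass, the same comparison with the forced oscillator of frequency $\sqrt{2+N}\,\gamma$ (cosine amplitude for (ii), sine amplitude for (iii)), and the same conclusion via $I\ge0$ and the uncertainty inequality. That part is correct.

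Case (i) has a genuine gap. With the virial functional, energy conservation and the only bound you invoke, $\left|\int\Re(\psi_1^*\psi_2)\,dx\right|\le\frac12 M$, the inequality you can actually derive is
\begin{equation*}
I''(t)+4\gamma^2I(t)=4E(t)+(N-2)\int_{\R^N}\Psi^*\tilde B[\Psi]\Psi\,dx
-8\lambda\int_{\R^N}\Re(\psi_1^*\psi_2)\,dx\le 4E(0)+4|\lambda|M(0),
\end{equation*}
so the constant $C$ in your $4E(0)+C|\lambda|M(0)$ is $4$, and the half-period comparison yields blow-up only under $E(0)+|\lambda|M(0)<\frac{\gamma^2}{2}I(0)$ --- strictly stronger than the hypothesis of (i), which is $E(0)+\frac{|\lambda|}{2}M(0)<\frac{\gamma^2}{2}I(0)$. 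This loss is not an artifact of constant-chasing: the coupling bound is saturated pointwise when $\psi_2=-\sign(\lambda)\psi_1$, and replacing $4E$ by $aE$ in the substitution (any admissible $a$) only makes the condition worse, so the factor $\frac12$ on $|\lambda|M(0)$ is unreachable within the oscillator framework. Your appeal to ``Carles' modified-energy bookkeeping'' cannot be grafted onto that framework, because Carles' method is not a refinement of the oscillator inequality but a replacement for it: the paper introduces the time-weighted functional $E_1(t)$, consisting of $\cos^2(\gamma t)$ times the kinetic--nonlinear--coupling part of the energy, $\sin^2(\gamma t)\,\frac{\gamma^2}{2}I(t)$, $\frac{\gamma}{2}\sin(2\gamma t)\int x\cdot J\,dx$, and the crucial extra term $\frac{|\lambda|}{2}\cos(2\gamma t)M(t)$. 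Hypothesis (i) is then exactly the statement $E_1(0)<0$; the monotonicity $\frac{d}{dt}E_1\le0$ on $\left[0,\frac{\pi}{2\gamma}\right]$ holds because the $\lambda$-terms in $\frac{d}{dt}E_1$ assemble into the perfect square $2\sign(\lambda)\Re(\psi_1^*\psi_2)-|\Psi|^2=-|\psi_1-\sign(\lambda)\psi_2|^2\le0$ (this cancellation, not any crude mass bound, is the source of the sharp constant $\frac{|\lambda|}{2}$); and the contradiction comes from the sign of $E_1$ at $t=\frac{\pi}{2\gamma}$. To prove (i) as stated you must carry out this monotonicity argument (or an equivalent one tracking the evolution of $\int\Re(\psi_1^*\psi_2)$); as written, your proposal proves only the weaker statement with $|\lambda|M(0)$ in place of $\frac{|\lambda|}{2}M(0)$.
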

\begin{proof}
We first consider case (i). Analogously to \cite{Car} we introduce the following functional
\begin{equation*}
\begin{aligned}
E_1(t)=&\cos^2(\gamma t)\int_{\R^N}\big(\frac12|\nabla\Psi|^2+\frac12\Psi^*\tilde B[\Psi]\Psi+2\lambda\Re(\psi_1^*\psi_2)\big)(t, x)dx\\
&+\sin^2(\gamma t)\int_{\R^N}\frac{\gamma^2}{2}|x|^2|\Psi(t, x)|^2dx
+\frac{\gamma}{2}\sin(2\gamma t)\int_{\R^N} x\cdot J(t, x)dx\\
&+|\lambda|\frac{\cos(2\gamma t)}{2}\int_{\R^N}|\Psi(t, x)|^2dx,
\end{aligned}
\end{equation*}
where $J(t, x)$ is the current density defined by
\begin{equation*}
J(x,t)=\Im(\psi_1^*\nabla\psi_1+\psi_2^*\nabla\psi_2)(t, x).
\end{equation*}
Let us first notice that the condition $E(0)+\frac{|\lambda|}{2}M(0)<\frac{\gamma^2}{2}I(0)$ implies $E_1(0)<0$. By computing the time derivative of $E_1$ we obtain
\begin{equation*}
\frac{d}{dt}E_1(t)=\gamma\sin(2\gamma t)\frac{N-2}{4}\int_{\R^N}\big(\Psi\tilde B[\Psi]\Psi\Big) (t, x)dx
+|\lambda|\gamma\sin(2\gamma t)\int_{\R^N} \big(2\sign(\lambda)\Re(\psi_1^*\psi_2)-|\Psi|^2\big)(t, x)dx.
\end{equation*}
By the assumptions on the coefficients $\beta_{ij}$'s we have $\Psi\tilde B[\Psi]\Psi<0$ for any $\Psi\neq0$, we thus infer
\begin{equation}\label{eq:decr_en}
\frac{d}{dt}E_1(t)\leq0,\quad\forall\;t\in\left[0, \frac{\pi}{2\gamma}\right].
\end{equation}
On the other hand, we see that if $\Psi$ exists in $[0, \frac{\pi}{2\gamma}]$, then 
\begin{equation*}
E_1\pare{\frac{\pi}{2\gamma}}=\frac{\gamma^2}{2}\int|x|^2|\Psi(x,t)|^2dx\geq0,
\end{equation*}
contradicting $E_1(0)<0$ and \eqref{eq:decr_en}. Thus, there exists $T^*\leq\frac{\pi}{2\gamma}$ such that
\begin{equation*}
\lim_{t\to T^*}\|\nabla\Psi(t)\|_{L^2}=\infty.
\end{equation*}
For cases (ii) and (iii), we use the virial identities. By similar calculations as above we have
\begin{equation*}
\begin{aligned}
I'(t)=&2\int_{\R^N} x\cdot J(x,t)dx\\
I''(t)=&\int_{\R^N}\big(2|\nabla\Psi|^2+N\Psi^*\tilde B[\Psi]\Psi-2\gamma^2|x|^2|\Psi|^2\big)(t, x)dx.
\end{aligned}
\end{equation*}
We write
\begin{equation*}
I''(t)=2NE(t)+(2-N)\int_{\R^N}|\nabla\Psi(x,t)|^2dx-(2+N)\gamma^2I(t)
-4N\lambda\int_{\R^N}\Re(\psi_1^*\psi_2)(t, x)dx.
\end{equation*}
From the conservation of energy and mass we then obtain
\begin{equation*}
I''(t)=-(2+N)\gamma^2I(t)+2N(E(0)+|\lambda|M(0))+R(t),
\end{equation*}
where 
\begin{equation*}
R(t)=(2-N)\int_{\R^N}|\nabla\Psi(t, x)|^2dx-4N\lambda\int_{\R^N}\Re(\psi_1^*\psi_2)(t, x)dx-2N|\lambda|M(t)\leq0.
\end{equation*}
The solution of the above differential equation is given by
\begin{equation*}
\begin{aligned}
I(t)=&\cos(\sqrt{2+N}\gamma t)I(0)+\frac{1}{\sqrt{2+N}\gamma}\sin(\sqrt{2+N}\gamma t)I'(0)\\
&+\frac{2N}{(2+N)\gamma^2}(E(0)+|\lambda|M(0))(1-\cos(\sqrt{2+N}\gamma t))\\
&+\frac{1}{\sqrt{2+N}\gamma}\int_0^t\sin(\sqrt{2+N}\gamma (t-s))R(s)ds\\
\leq&\cos(\sqrt{2+N}\gamma t)I(0)+\frac{1}{\sqrt{2+N}\gamma}\sin(\sqrt{2+N}\gamma t)I'(0)\\
&+\frac{2N}{(2+N)\gamma^2}(E(0)+|\lambda|M(0))(1-\cos(\sqrt{2+N}\gamma t)).
\end{aligned}
\end{equation*}
With the inequality valid at least for $ t\in \left[0,\frac{\pi}{\gamma\sqrt{2+N}}\right]$. We then notice that, if (ii) or (iii) are satisfied, then $I(t)$ will eventually become negative, giving a contradiction.
The unboundedness of the gradient then follows from the conservation of mass and the uncertainty inequality
\begin{equation*}
\|\Psi(t)\|_{L^2}^2\leq\frac{2}{N}\||\cdot|\Psi(t)\|_{L^2}\|\nabla\Psi(t)\|_{L^2}
\lesssim\sqrt{I(t)}\|\nabla\Psi(t)\|_{L^2}.
\end{equation*}
\end{proof}

By taking $\lambda=0$ we can apply Theorem  \ref{thm:GWP1}  also to system \eqref{eq:u_sys_main}.  If we 
consider formulas \eqref{eq:chi} for the coefficients $\chi$, $\tilde{\chi}$, we then obtain the following global well-posedness result for  \eqref{eq:u_sys_main}.
\begin{proposition}\label{prop:u_lwp}
Let $U_0\in\Sigma(\R^N)$, then there exist  a unique, maximal solution $U\in\cfun([0, S_{max}); \Sigma(\R^N))$ for the system \eqref{eq:u_sys_main}. The usual blow-up alternative holds true and for any time $0<T<S_{max}$, admissible pair $(q, r)$, we have
\begin{equation*}
U, \nabla U, |\cdot|U\in L^q([0, T]; L^r(\R^N)).
\end{equation*}
Moreover, we have global existence, i.e. $S_{max}=\infty$, in the following cases:
\begin{itemize}
\item
  	$\beta_{11}+\beta_{22}\geq 0$ and  $ \beta_{11}+ 2\beta_{12}+\beta_{22}\geq 0$;
\item    $\beta_{11}+\beta_{22}< 0$ and $\frac{3}{2} \left| \beta_{11}+\beta_{22}\right|<\beta_{12}$.
  \end{itemize}
\end{proposition}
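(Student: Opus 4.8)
The plan is to read Proposition \ref{prop:u_lwp} as a direct consequence of the abstract local theory in Proposition \ref{prop:LWP} together with the global existence criteria of Theorem \ref{thm:GWP1}, the key observation being that the effective system \eqref{eq:u_sys_main} is itself an instance of \eqref{eq:nlss}. Indeed, writing $U^t=(u_1,u_2)$, the nonlinear part of \eqref{eq:u_sys_main} is precisely $\tilde B[U]U$ with the matrix $\tilde B$ of \eqref{eq:B_tilde} evaluated at the substituted coefficients $\beta_{11}\mapsto\chi$, $\beta_{22}\mapsto\chi$, $\beta_{12}\mapsto\tilde\chi$, and with vanishing linear coupling $\lambda=0$; one checks this by expanding $\tilde B[U]U$ and using $u_1 u_2^* u_2=|u_2|^2u_1$, etc. Hence every structural result already established for \eqref{eq:nlss} transfers.

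For the first part (local well-posedness, blow-up alternative, Strichartz membership) I would apply Proposition \ref{prop:LWP} with $\nlin(U)=\tilde B[U]U$ read at the effective coefficients. Taking $\nlin_1\equiv0$ and $\nlin_2=\nlin$, the only hypothesis to verify is the cubic bound $|\nlin_2(U)-\nlin_2(\tilde U)|\leq C(|U|^2+|\tilde U|^2)|U-\tilde U|$, which is immediate since $\nlin$ is a vector of cubic monomials in $U,U^*$ with constant $C$ depending only on $|\chi|+|\tilde\chi|$. Proposition \ref{prop:LWP} then delivers the unique maximal solution $U\in\cfun([0,S_{max});\Sigma(\R^N))$, the blow-up alternative, and $U,\na U,|\cdot|U\in L^q([0,T];L^r(\R^N))$ for every $0<T<S_{max}$ and every admissible pair $(q,r)$.

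For the global existence part I would invoke Theorem \ref{thm:GWP1} with the coefficients $(\chi,\chi,\tilde\chi)$ in place of $(\beta_{11},\beta_{22},\beta_{12})$; since $\lambda=0$ here, only case (1) of that theorem is relevant, and both of its sub-alternatives make the potential part of the energy, $\tfrac12\int U^*\tilde B[U]U=\tfrac12\int(\chi|u_1|^4+2\tilde\chi|u_1|^2|u_2|^2+\chi|u_2|^4)$, non-negative along the flow, so that energy conservation yields a uniform bound on $\|\na U\|_{L^2}^2+\||\cdot|U\|_{L^2}^2$ and hence $S_{max}=\infty$ by the blow-up alternative. It then remains only to translate the two stated conditions via $\tilde\chi=\tfrac{\beta_{11}+\beta_{22}}{2}$ and $\chi=\tfrac{\tilde\chi+\beta_{12}}{2}$. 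The first condition $\beta_{11}+\beta_{22}\geq0$, $\beta_{11}+2\beta_{12}+\beta_{22}\geq0$ is equivalent to $\tilde\chi\geq0$ and $\chi\geq0$, i.e. all three effective coefficients non-negative, which is the first sub-alternative of case (1). For the second condition I would argue: if $\beta_{11}+\beta_{22}<0$ then $\tilde\chi<0$, and $\tfrac32|\beta_{11}+\beta_{22}|<\beta_{12}$ reads $-3\tilde\chi<\beta_{12}$, whence $\tilde\chi+\beta_{12}>-2\tilde\chi>0$ and therefore $\chi=\tfrac{\tilde\chi+\beta_{12}}{2}>-\tilde\chi=|\tilde\chi|>0$; this gives $\chi>0$, $\tilde\chi<0$ and $\tilde\chi^2<\chi^2$, exactly the second sub-alternative $\beta_{12}^2<\beta_{11}\beta_{22}$ with $\beta_{11}\geq0$, $\beta_{12}<0$ read for the effective coefficients.

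I do not expect a genuine analytic obstacle, since the proposition is essentially a corollary and its content is the coefficient dictionary. The one place requiring care is the sign-chasing in the second global case, where one must check that the effective cross coefficient $\tilde\chi$ is negative while the effective self-coefficient $\chi$ strictly dominates it in modulus, so that the effective interaction matrix with diagonal entries $\chi$ and off-diagonal entries $\tilde\chi$ is positive definite; this is precisely what makes the potential energy non-negative and reduces the argument to the a priori $\Sigma$-bound of Theorem \ref{thm:GWP1}.
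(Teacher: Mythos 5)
Your proposal is correct and follows essentially the same route as the paper: the paper itself obtains this proposition by applying Proposition \ref{prop:LWP} for the local theory and Theorem \ref{thm:GWP1} with $\lambda=0$ under the coefficient substitution $(\beta_{11},\beta_{22},\beta_{12})\mapsto(\chi,\chi,\tilde\chi)$, which is exactly your argument. Your sign-chasing (in particular $\chi=\tfrac{\tilde\chi+\beta_{12}}{2}$, so the second stated condition is equivalent to $\tilde\chi<0$ and $\chi>|\tilde\chi|$, i.e.\ positive definiteness of the effective interaction matrix) correctly supplies the verification the paper leaves implicit.
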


\section{The transformed system}\label{sect:trans}
By looking at system \eqref{eq:nlss} we may infer it is not possible to study the limit $|\lambda|\to\infty$ directly there. This is already clear  by looking at the linear part of \eqref{eq:nlss}, namely
\begin{equation}\label{eq:lin_sys}
i\d_t\Psi=-\frac12\Delta\Psi+\frac{\gamma^2}{2}|x|^2\Psi+A\Psi.
\end{equation}
While for $\lambda=0$ (i.e. $A=0$), the two equations in \eqref{eq:lin_sys} decouple and they evolve independently through the Hamiltonian $H=-\frac12\Delta+\frac{\gamma^2}{2}|x|^2$, $\Psi(t)=S_0(t)\Psi_0$, in the case $\lambda\neq0$ the two equations are coupled, and we have 
\begin{equation*}
\Psi(t)=S_\lambda(t)\Psi_0,
\end{equation*}
where 
$S_\lambda(t)=S_0(t)\Omega_\lambda(t)=e^{-itH}\Omega_\lambda(t)$, and 
\begin{equation*}
\Omega_\lambda(t):=e^{-itA}
=\left(\begin{array}{cc}\cos(\lambda t)&-i\sin(\lambda t)\\-i\sin(\lambda t)&\cos(\lambda t)\end{array}\right).
\end{equation*}
Hence, from the formula above for $S_\lambda(t)$, we see that in order to study the asymptotic behavior of solutions to 
\eqref{eq:nlss}, we first have to transform it by eliminating the oscillations in the one-parameter group family $S_\lambda(t)$.
We have
\begin{equation}\label{eq:91}
\Omega_\lambda(t)=V\left(\begin{array}{cc}e^{-i\lambda t}&\\&e^{i\lambda t}\end{array}\right)V, 
\end{equation}
where $V=\frac{1}{\sqrt2}\left(\begin{array}{cc}1&1\\1&-1\end{array}\right)$. 
\begin{lemma}\label{lemma:transf}
Let $\Psi(t)$ be a solution to \eqref{eq:nlss}. Then
\begin{equation}\label{eq:phi}
\Phi(t)=V\Omega_\lambda(-t)\Psi(t).
\end{equation}
is a solution of the following NLS system
\begin{equation}\label{eq:phi_sys1}
\left\{\begin{aligned}
i\d_t\Phi=&-\frac12\Delta\Phi+\frac{\gamma^2}{2}|x|^2\Phi+\hat B[\Phi]\Phi\\
\Phi(0)=&\Phi_0,
\end{aligned}\right.
\end{equation}
where $\Phi_0=V\Psi_0$, $\hat B[\Phi]:=\hat B^\infty[\Phi]+R^\lambda[\Phi]$, with 
\begin{equation}\label{eq:b_infty}
\hat B^\infty[\Phi]=
\frac14\left(\begin{array}{cc}(\beta_{11}+2\beta_{12}+\beta_{22})|\phi|^2&2(\beta_{11}+\beta_{22})\phi_1\phi_2^*\\
2(\beta_{11}+\beta_{22})\phi_1^*\phi_2&(\beta_{11}+2\beta_{12}+\beta_{22})|\phi_2|^2
\end{array}\right),
\end{equation}
 and
\begin{equation}\label{eq:R_lambda}
\begin{aligned}
R^\lambda[\Phi]:=&
\frac14(\beta_{11}-\beta_{22})
\left(\begin{array}{cc}
e^{-2i\lambda t}\phi_1\phi_2^*+e^{2i\lambda t}\phi_1^*\phi_2&e^{2i\lambda t}(|\phi_1|^2+|\phi_2|^2)\\
e^{-2i\lambda t}(|\phi_1|^2+|\phi_2|^2)&e^{-2i\lambda t}\phi_1\phi_2^*+e^{2i\lambda t}\phi_1^*\phi_2
\end{array}\right)\\
&+\frac14(\beta_{11}-2\beta_{12}+\beta_{22})
\left(\begin{array}{cc}&e^{4i\lambda t}\phi_1^*\phi_2\\e^{-4i\lambda t}\phi_1\phi_2^*\end{array}\right).
\end{aligned}
\end{equation}
Conversely, if $\Phi$ is a solution to \eqref{eq:phi_sys1}, then
\begin{equation*}
\Psi(t)=\Omega_\lambda(t)V\Phi(t)
\end{equation*}
is a solution to \eqref{eq:nlss}, with $\Psi_0=V\Phi_0$.
\end{lemma}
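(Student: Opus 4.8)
The plan is to verify the transformation by a direct time differentiation and then to identify the transformed nonlinearity through an explicit algebraic computation.

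First I would differentiate $\Phi(t)=V\Omega_\lambda(-t)\Psi(t)$ in time. Since $\Omega_\lambda(-t)=e^{itA}$ we have $\partial_t\Omega_\lambda(-t)=iA\,\Omega_\lambda(-t)$, so that, writing \eqref{eq:nlss} in the form $i\partial_t\Psi=H\Psi+\tilde B[\Psi]\Psi+A\Psi$ with $H=-\frac12\Delta+\frac{\gamma^2}{2}|x|^2$, I obtain
\begin{equation*}
i\partial_t\Phi=-VA\Omega_\lambda(-t)\Psi+V\Omega_\lambda(-t)\big(H\Psi+\tilde B[\Psi]\Psi+A\Psi\big).
\end{equation*}
Two structural facts then do the work. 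The matrix $A$ is an analytic function of itself and hence commutes with $\Omega_\lambda(-t)$, so the two linear-coupling terms cancel, $-VA\Omega_\lambda(-t)\Psi+V\Omega_\lambda(-t)A\Psi=0$; this cancellation is precisely the purpose of the transformation. The Hamiltonian $H$ acts as a scalar on the two components and therefore commutes with the constant matrices $V$ and $\Omega_\lambda(-t)$, giving $V\Omega_\lambda(-t)H\Psi=H\Phi$. Hence
\begin{equation*}
i\partial_t\Phi=H\Phi+V\Omega_\lambda(-t)\,\tilde B[\Psi]\Psi,
\end{equation*}
and the whole content of the lemma is reduced to showing that the last term equals $\hat B[\Phi]\Phi$.

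For this I would use the inverse change of variables $\Psi=\Omega_\lambda(t)V\Phi$, which follows from $V^2=\id$ and $\Omega_\lambda(t)\Omega_\lambda(-t)=\id$. Writing $W:=\Omega_\lambda(t)V=V\,\mathrm{diag}(e^{-i\lambda t},e^{i\lambda t})$ by \eqref{eq:91}, one has the explicit components
\begin{equation*}
\psi_1=\tfrac{1}{\sqrt2}(e^{-i\lambda t}\phi_1+e^{i\lambda t}\phi_2),\qquad \psi_2=\tfrac{1}{\sqrt2}(e^{-i\lambda t}\phi_1-e^{i\lambda t}\phi_2),
\end{equation*}
together with $V\Omega_\lambda(-t)=W^{-1}=\mathrm{diag}(e^{i\lambda t},e^{-i\lambda t})V$. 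I would then substitute these expressions into $\tilde B[\Psi]\Psi$ (with $\tilde B$ given by \eqref{eq:B_tilde}), expand the resulting cubic polynomials in $\phi_1,\phi_2,\phi_1^*,\phi_2^*$, and finally apply $W^{-1}$. Collecting the monomials according to their net frequency, the phase-free terms assemble into $\hat B^\infty[\Phi]\Phi$ with the coefficients in \eqref{eq:b_infty}, while the terms carrying $e^{\pm2i\lambda t}$ and $e^{\pm4i\lambda t}$ assemble into $R^\lambda[\Phi]\Phi$ as in \eqref{eq:R_lambda}.

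The main obstacle is purely this bookkeeping: the cubic expansion produces a large number of cross terms, and one must check that after applying $W^{-1}$ the non-oscillating remainder has exactly the symmetric structure \eqref{eq:b_infty} (up to the prefactor $\frac14$, the diagonal entries carry $\beta_{11}+2\beta_{12}+\beta_{22}$ and the off-diagonal entries $2(\beta_{11}+\beta_{22})$), and that every surviving phase factor is either $e^{\pm2i\lambda t}$ or $e^{\pm4i\lambda t}$ with coefficients $\frac14(\beta_{11}-\beta_{22})$ and $\frac14(\beta_{11}-2\beta_{12}+\beta_{22})$ respectively. It is convenient to track separately the contributions proportional to $\beta_{11}$, $\beta_{22}$ and $\beta_{12}$, since $\hat B^\infty$ is the average over the fast oscillation while $R^\lambda$ is its mean-zero fluctuation. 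Finally, the converse implication requires no new computation: the map $\Psi\mapsto\Phi=V\Omega_\lambda(-t)\Psi$ is a pointwise-in-$x$, smooth-in-$t$, invertible linear transformation on $\C^2$ with inverse $\Phi\mapsto\Omega_\lambda(t)V\Phi$, so it preserves the class $\cfun([0,T];\Sigma(\R^N))$, and the identities above read backwards show that $\Psi=\Omega_\lambda(t)V\Phi$ solves \eqref{eq:nlss} whenever $\Phi$ solves \eqref{eq:phi_sys1}; the initial data match since $\Omega_\lambda(0)=\id$.
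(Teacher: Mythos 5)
Your proposal is correct and takes essentially the same approach as the paper: both arguments reduce the lemma to the inverse substitution $\Psi=\Omega_\lambda(t)V\Phi$ and the explicit computation of the conjugated nonlinear matrix $V\Omega_\lambda(-t)\tilde B[\Psi]\Omega_\lambda(t)V$, split into the resonant part $\hat B^\infty[\Phi]$ and the mean-zero oscillating part $R^\lambda[\Phi]$, with the same ``tedious but straightforward'' cubic expansion left as bookkeeping. The only cosmetic difference is that you differentiate $\Phi=V\Omega_\lambda(-t)\Psi$ pointwise in time (using that $A$ commutes with $e^{itA}$ and that $H$ commutes with $x$-independent matrices), whereas the paper performs the equivalent manipulation on the Duhamel formulation \eqref{eq:duham}, which is the form directly adapted to the mild solutions produced by Proposition \ref{prop:LWP}.
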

\begin{proof}
Let us start by writing system \eqref{eq:nlss} in its equivalent integral formulation
\begin{equation}\label{eq:duham}
\Psi(t)=S_\lambda(t)\Psi_0-i\int_0^tS_\lambda(t-s)\tilde B[\Psi]\Psi(s)ds.
\end{equation}
By using formula \eqref{eq:phi} we get
\begin{equation*}
\Phi(t)=S_0(t)V\Psi_0-i\int_0^tS_0(t-s)V\Omega_\lambda(-s)\tilde B[\Psi]\Omega_\lambda(s)V\Phi(s)\,ds.
\end{equation*}
We want to compute the nonlinear potential matrix 
$V^t\Omega_\lambda(-s)\tilde B[\Psi]\Omega_\lambda(s)V=:\hat B[\Phi]$.
After some tedious but straightforward calculations, and by rearranging a bit the terms we write
\begin{equation*}
\begin{aligned}
\hat B[\Phi]=&
\frac14\left[(\beta_{11}+2\beta_{12}+\beta_{22})\left(\begin{array}{cc}|\phi_1|^2&\phi_1\phi_2^*\\
\\\phi_1^*\phi_2&|\phi_2|^2\end{array}\right)
+(\beta_{11}-2\beta_{12}+\beta_{22})
\left(\begin{array}{cc}|\phi_2|^2&\\&|\phi_1|^2\end{array}\right)
\right]\\
&+\frac14(\beta_{11}-\beta_{22})\left[
\id\pare{e^{-2i\lambda t}\phi_1\phi_2^*+e^{2i\lambda t}\phi_1^*\phi_2}
+\left(\begin{array}{cc}&e^{2i\lambda t}\\e^{-2i\lambda t}&\end{array}\right)
(|\phi_1|^2+|\phi_2|^2)
\right]\\
&+\frac14(\beta_{11}-2\beta_{12}+\beta_{22})
\left(\begin{array}{cc}&e^{4i\lambda t}\phi_1^*\phi_2\\e^{-4i\lambda t}\phi_1\phi_2^*\end{array}\right)\\
=&:\hat B^\infty[\Phi]+R^\lambda[\Phi],
\end{aligned}
\end{equation*}
where $\hat B^\infty[\Phi]$ is the homogeneous part and $R^\lambda[\Phi]$ the one with time-dependent coefficients.
Let us notice that the term 
$\hat B^\infty[\Phi]\Phi$ can also be written in the following compact form
\begin{equation*}
\frac14\left(\begin{array}{cc}(\beta_{11}+2\beta_{12}+\beta_{22})|\phi|^2&2(\beta_{11}+\beta_{22})\phi_1\phi_2^*\\
2(\beta_{11}+\beta_{22})\phi_1^*\phi_2&(\beta_{11}+2\beta_{12}+\beta_{22})|\phi_2|^2
\end{array}\right)\Phi,
\end{equation*}
thus we may rename $\hat B^\infty[\Phi]$ to be the matrix in the expression above.
Consequently, the equation for the transformed variable $\Phi$ becomes
\begin{equation}\label{eq:phi_sys}
i\d_t\Phi=-\frac12\Delta\Phi+\frac{\gamma^2}{2}|x|^2\Phi+\hat B[\Phi]\Phi,
\end{equation}
with initial datum $\Phi(0)=V\Psi_0$.
\end{proof}
It is straightforward to check that the nonlinear terms $\hat B^\infty, R^\lambda$ satisfy
\begin{align}
|\hat B^\infty[F_1]F_1-\hat B^\infty[F_2]F_2|\lesssim(|F_1|^2+|F_2|^2)|F_1-F_2|,\label{eq:11}\\
|\nabla\left(\hat B^\infty[F_1]F_1-\hat B^\infty[F_2]F_2\right)|\lesssim(|F_1|^2+|F_2|^2)|\nabla(F_1-F_2)|\label{eq:12}\\
|\nabla^k(R^\lambda[F_1]F_1)|\lesssim|F_1|^2|\nabla^kF_1|,\quad k=0, 1\label{eq:13}
\end{align}
Formally, in the limit as $|\lambda|$ goes to infinity, we expect the non-homogeneous part $R^\lambda$ to cancel, because of the oscillating coefficients in front of the nonlinearities which average out to zero. We thus obtain the system of coupled nonlinear Schr\"odinger equations  \eqref{eq:u_sys_main}, which we rewrite in a compact form:
\begin{equation}\label{eq:u_sys}
i\d_tU=-\frac12\Delta U+\frac{\gamma^2}{2}|x|^2U+\hat B^\infty[U]U,
\end{equation}
with energy
\begin{equation}\label{eq:u_en}
\begin{aligned}
\hat E(t)=&\int_{\R^N}\Big(\frac12|\nabla U|^2+\frac{\gamma^2}{2}|x|^2|U|^2+\frac12U^*\hat B^\infty[U]U\Big)(x,t)dx\\
=&\int_{\R^N}\Big(\frac12|\nabla U|^2+\frac{\gamma^2}{2}|x|^2|U|^2
+\frac{\chi}{2}(|u_1|^4+|u_2|^4)
+\tilde\chi|u_1|^2|u_2|^2\Big)(x,t)dx,
\end{aligned}
\end{equation}
where $\chi, \tilde\chi$ are defined in \eqref{eq:chi}. Moreover, since there is no linear coupling in \eqref{eq:u_sys} between the two equations, then the two total component masses, namely
\begin{equation*}
\hat M_j(t)=\int_{\R^N}| u_j(x,t)|^2dx, \mbox{ with } j=1,2,
\end{equation*}
are conserved along the flow of solutions to \eqref{eq:u_sys}.
\begin{remark}
Let us consider formula \eqref{eq:duham} again, and suppose we want to eliminate the oscillations by simply considering
\begin{equation*}
\tilde\Phi(t):=\Omega_\lambda(-t)\Psi(t).
\end{equation*}
Then we obtain the following system
\begin{equation*}
\tilde\Phi(t)=S_0(t)\tilde\Phi_0-i\int_0^tS_0(t-s)B_2[\tilde\Phi]\tilde\Phi(s)ds,
\end{equation*}
but now the nonlinear potential matrix $B_2[\tilde\Phi]$ is given by the more complicated expression
\begin{equation*}
\begin{aligned}
B_2[\tilde\Phi]=&\frac14(\beta_{11}+\beta_{22})|\tilde\Phi|^2
+\frac{\beta_{12}}{2}\left(\begin{array}{cc}0&1\\1&0\end{array}\right)(|\tilde\phi_1|^2-|\tilde\phi_2|^2)\\
&+\frac18(\beta_{11}+2\beta_{12}+\beta_{22})\left[
\left(\begin{array}{cc}1&-1\\1&-1\end{array}\right)\tilde\phi_1\tilde\phi_2^*+
\left(\begin{array}{cc}1&1\\-1&-1\end{array}\right)\phi_1^*\phi_2\right]\\
&+e^{-2i\lambda t}\left\{
\frac18(\beta_{11}-\beta_{22})\left(\begin{array}{cc}1&1\\-1&-1\end{array}\right)(|\tilde\phi_1|^2+|\tilde\phi_2|^2)+
\frac14(\beta_{11}-\beta_{22})\id\tilde\phi_1\tilde\phi_2^*
\right\}\\
&+e^{2i\lambda t}\left\{
\frac18(\beta_{11}-\beta_{22})\left(\begin{array}{cc}1&-1\\1&-1\end{array}\right)(|\tilde\phi_1|^2+|\tilde\phi_2|^2)+
\frac14(\beta_{11}-\beta_{22})\id\tilde\phi_1^*\tilde\phi_2
\right\}\\
&+\frac18(\beta_{11}-2\beta_{12}+\beta_{22})\left\{
e^{-4i\lambda t}\left(\begin{array}{cc}1&1\\-1&-1\end{array}\right)\tilde\phi_1\tilde\phi_2^*
+e^{4i\lambda t}\left(\begin{array}{cc}1&-1\\1&-1\end{array}\right)\tilde\phi_1^*\tilde\phi_2
\right\}.
\end{aligned}
\end{equation*}
Furthermore, if we only consider the autonomous part, we see that even in the asymptotic limit we obtain an expression for the nonlinear potential matrix which is rather complicated.
\newline
On the other hand, if all the inter- and intra-species coefficients equal, i.e. $\beta_{11}=\beta_{12}=\beta_{22}\equiv\beta$, then the expression (and the analysis) simplifies considerably. Numerical studies using this transformation in the case of equal coefficients are performed in \cite{Decon, Nista}. Nevertheless our aim in this paper is to consider a general choice for the coefficients 
$\beta_{ij}$'s, that is why we choose to transform the system in terms of $\Phi$.
\end{remark}
\section{Asymptotics for $\lambda \rightarrow \infty$}\label{sect:as}
In this Section we prove the rigorous convergence of solutions to system \eqref{eq:phi_sys}, towards solutions to system 
\eqref{eq:u_sys}, when $|\lambda|\to\infty$. Here we follow the same strategy as in Cazenave, Scialom \cite{Caz10}.
The main result of this Section is Theorem \ref{thm:main}:
\begin{theorem}\label{thm:main}
Let $\Phi_0\in\Sigma(\R^N)$. For any $\lambda\in\R$, we denote by $\Phi^\lambda$ the unique maximal solution to 
\eqref{eq:phi_sys}. Let $U$ be the solution to \eqref{eq:u_sys}, with initial data $U(0)=\Phi_0$, in $[0, S_{max})$, where 
$0<S_{max}\leq\infty$, as in Proposition \ref{prop:u_lwp}.
\begin{itemize}
\item For any $0<T<S_{max}$, the solution $\Phi^\lambda$ exists in $[0, T]$ provided $|\lambda|$ is sufficiently large.
\item For any $0<T<S_{max}$, $(q, r)$ admissible pair, we have
\begin{equation}\label{eq:conv}
\|\Phi^\lambda-U\|_{L^q_tL^r_x}+\|\nabla(\Phi^\lambda-U)\|_{L^q_tL^r_x}+\||\cdot|(\Phi^\lambda-U)\|_{L^q_tL^r_x}\to0,
\end{equation}
as $|\lambda|\to\infty$, where the $L^q_tL^r_x-$norms are taken in the space-time slab $[0, T]\times\R^N$. In particular, convergence holds in $\cfun([0, T];\Sigma(\R^N))$.
\end{itemize}
\end{theorem}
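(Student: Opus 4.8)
The plan is to follow the Duhamel/Strichartz strategy of Cazenave--Scialom, treating the oscillating part $R^\lambda$ as a perturbation that averages to zero against the fixed profile $U$. Since $U$ exists on $[0,T]$ with finite $\Sigma$-Strichartz norms by Proposition \ref{prop:u_lwp}, I would write both equations in integral form and subtract:
\begin{equation*}
\Phi^\lambda(t)-U(t)=-i\int_0^t S_0(t-s)\big(\hat B^\infty[\Phi^\lambda]\Phi^\lambda-\hat B^\infty[U]U\big)(s)\,ds-i\int_0^t S_0(t-s)R^\lambda[\Phi^\lambda]\Phi^\lambda(s)\,ds.
\end{equation*}
The first integral is estimated exactly as in Proposition \ref{prop:LWP}: the inhomogeneous estimate of Proposition \ref{prop:strich}, the commutation relations \eqref{eq:comm} for the $\nabla$ and $|\cdot|$ components, and the Lipschitz bounds \eqref{eq:11}--\eqref{eq:12} yield a contribution of the form $C(\|\Phi^\lambda\|,\|U\|)\,\tau^\theta\,\|\Phi^\lambda-U\|$ on a slab of length $\tau$, with $\theta>0$. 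In the second integral I would split $R^\lambda[\Phi^\lambda]\Phi^\lambda=\big(R^\lambda[\Phi^\lambda]\Phi^\lambda-R^\lambda[U]U\big)+R^\lambda[U]U$: the difference is again Lipschitz-controlled by $\|\Phi^\lambda-U\|$ with a constant independent of $\lambda$, since the oscillating factors have modulus one and \eqref{eq:13} holds with $\lambda$-independent constants, while the remaining \emph{source} term depends only on the fixed profile $U$.

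The heart of the matter is the averaging estimate
\begin{equation*}
\varepsilon_\lambda:=\Big\|\int_0^\cdot S_0(\cdot-s)R^\lambda[U]U(s)\,ds\Big\|_{L^q_tL^r_x}\longrightarrow 0,\qquad|\lambda|\to\infty,
\end{equation*}
together with its $\nabla$- and $|\cdot|$-weighted versions. I would prove this by the standard density plus uniform-boundedness argument. The operator $\mathcal T_\lambda:g\mapsto\int_0^\cdot S_0(\cdot-s)e^{ic\lambda s}g\,ds$ is bounded, uniformly in $\lambda$, on the relevant dual-to-Strichartz spaces (the inhomogeneous estimate together with \eqref{eq:comm}, noting $|e^{ic\lambda s}|=1$ and that the spatial derivatives do not produce factors of $\lambda$). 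On a dense set of smooth, space-time localized $g$, integrating by parts in time against the bounded primitive of the oscillating factor produces an explicit $O(1/|\lambda|)$ bound, the time derivative then falling only on $g$ and on $S_0$ through $\partial_s S_0(t-s)=iHS_0(t-s)$, which is harmless on the smooth approximants. An $\varepsilon/3$ argument then transfers the decay to the fixed, rough datum $R^\lambda[U]U$.

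Finally I would close with a continuity/bootstrap scheme. By local well-posedness $\Phi^\lambda$ exists on a maximal interval, and on any slab where $\|\Phi^\lambda-U\|\le 1$ the $\Sigma$-norm of $\Phi^\lambda$ is controlled by that of $U$, so the blow-up alternative of Proposition \ref{prop:LWP} forbids breakdown before $T$. Choosing $\tau$ small enough that $C(\|U\|)\tau^\theta<\tfrac12$ and absorbing the Lipschitz terms gives, on each of the finitely many subintervals covering $[0,T]$, the bound $\|\Phi^\lambda-U\|_{[t_j,t_{j+1}]}\le 2\big(\varepsilon_\lambda+\|(\Phi^\lambda-U)(t_j)\|_\Sigma\big)$; iterating yields $\|\Phi^\lambda-U\|\le C_T\,\varepsilon_\lambda\to0$, which simultaneously proves existence of $\Phi^\lambda$ on $[0,T]$ for $|\lambda|$ large and the convergence \eqref{eq:conv}. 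The main obstacle is the averaging estimate: one must reconcile the time integration by parts, which formally costs two spatial derivatives through $H$, with the mere $\Sigma$-regularity of $U$. The density argument is precisely what resolves this, confining the derivative loss to the smooth approximants while the uniform Strichartz bound carries the $O(1/|\lambda|)$ decay over to the actual nonlinear datum.
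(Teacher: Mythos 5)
Your proposal is correct and follows the same overall strategy as the paper: Duhamel formula for the difference, Strichartz plus Lipschitz estimates for the autonomous part $\hat B^\infty[\Phi^\lambda]\Phi^\lambda-\hat B^\infty[U]U$, an averaging lemma for the oscillatory part proved by density and integration by parts in time (this is exactly the paper's Lemma \ref{lemma:weak_conv}), and a continuation/bootstrap over finitely many subintervals of $[0,T]$ to propagate both existence and the uniform $\Sigma$-bounds. There is, however, one place where your treatment genuinely diverges, and it is to your advantage: you split
\begin{equation*}
R^\lambda[\Phi^\lambda]\Phi^\lambda=\bigl(R^\lambda[\Phi^\lambda]\Phi^\lambda-R^\lambda[U]U\bigr)+R^\lambda[U]U,
\end{equation*}
absorb the $\lambda$-uniform Lipschitz difference into the contraction part, and apply the averaging estimate only to the \emph{fixed} profile $R^\lambda[U]U$. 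The paper instead applies its Lemma \ref{lemma:weak_conv} directly to $R^\lambda[\Phi^\lambda]\Phi^\lambda$, asserting convergence ``because every cubic term has an oscillatory coefficient''; strictly speaking the lemma is stated for data $f$ independent of $\lambda$ (its proof integrates by parts against a smooth approximant of a fixed $f$), so the paper's step is loose when the datum itself depends on $\lambda$. Your splitting is the standard Cazenave--Scialom repair of precisely this point, so your route is not only equivalent but slightly more careful. The remaining structural difference is cosmetic: the paper factors the argument into a conditional convergence statement (Proposition \ref{lemma:14}, assuming uniform $\Sigma$-bounds) followed by an iteration that restarts local well-posedness at times $\delta, 2\delta,\dots$ using the endpoint convergence to keep $\|\Phi^\lambda(j\delta)\|_\Sigma\leq M$, while you run a single bootstrap with the source term $\eps_\lambda$; both close the argument in the same way. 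One small caveat: your phrase that the difference of the $R^\lambda$-terms ``is Lipschitz-controlled by \eqref{eq:13}'' requires the difference analogue of \eqref{eq:13} (with constants independent of $\lambda$, including the gradient cross terms where derivatives fall on $\Phi^\lambda-U$ versus on $\Phi^\lambda$ or $U$); this is elementary for cubic nonlinearities with unimodular coefficients, but it should be stated, since \eqref{eq:13} as written only bounds $R^\lambda[F_1]F_1$ itself.
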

Let us recall the definition of $\Phi^\lambda$ given in Lemma \ref{lemma:transf}, then \eqref{eq:conv} yields
\begin{equation*}
\Psi^\lambda(t)-\Omega_\lambda(t)VU(t)\to0,\quad\textrm{in}\;L^q([0, T]; L^r(\R^N)),
\end{equation*}
as $|\lambda|\to\infty$, and the same holds for its gradient and for the multiplication by $|x|$. This is exactly what is stated in formula \eqref{eq:psi_conv}.
Hence Theorem \ref{thm:main1} directly follows from the one above.
\newline
The idea of the proof can be described as follows; let us consider the equation satisfied by the difference $\Phi^\lambda-U$. By using the integral formulation we may write
\begin{equation*}
\begin{aligned}
\Phi^\lambda(t)-U(t)=&-i\int_0^tS_0(t-s)\left[\hat B^\infty[\Phi^\lambda]\Phi^\lambda-\hat B^\infty[U]U\right](s)ds\\
&-i\int_0^tS_0(t-s)R^\lambda[\Phi^\lambda]\Phi^\lambda(s)ds=:-iI_1-iI_2,
\end{aligned}
\end{equation*}
see \eqref{eq:b_infty}, \eqref{eq:R_lambda} for the definitions of $\hat B^\infty, R^\infty$.
\newline
The oscillating coefficients in $R^\lambda$ converge weakly to zero, and since they appear inside the time integral in the Duhamel's formula, then $I_2$ converges (strongly) to zero. For the $I_1$ part, on the other hand, we can use a Lipschitz estimate for the nonlinearity $\hat B^\infty[F]F$ and close the convergence with a continuity argument.
\newline
To prove Theorem \ref{thm:main} above we proceed in two steps: first we prove that, as long as we have uniform bounds on $\Phi^\lambda$ on a space-time slab, we obtain the convergence. Then we prove that we indeed have such bounds on $\Phi^\lambda$ in
$[0, T]\times\R^N$, with $0<T<S_{max}$, provided $|\lambda|$ is sufficiently large.
\newline
We start by giving a technical Lemma which will be used to prove the convergence of $I_2$ to zero.
\begin{lemma}\label{lemma:weak_conv}
Let $(\tilde q, \tilde r)$ be an admissible pair, $0<T<\infty$, and let 
$f\in L^{\tilde q'}([0, T]; L^{\tilde r'}(\R^N))$. Then, for any admissible pair $(q, r)$, we have
\begin{equation}\label{eq:93}
\|\int_0^tS_0(t-s)e^{ia\lambda s}f(s)ds\|_{L^q([0, T];L^r(\R^N))}\to0,\quad\textrm{as}\;|\lambda|\to\infty,
\end{equation}
where $a$ can be any non-zero real number.
\end{lemma}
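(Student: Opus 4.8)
The plan is to prove the oscillatory-integral estimate \eqref{eq:93} by a density argument: first establish it on a dense class of simple source terms where one can integrate by parts in time and exploit the $e^{ia\lambda s}$ factor directly, and then pass to general $f \in L^{\tilde q'}_t L^{\tilde r'}_x$ using the Strichartz bound from Proposition~\ref{prop:strich}, which controls the operator uniformly in $\lambda$.

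Concretely, I would start by writing $T^\lambda f := \int_0^t S_0(t-s) e^{ia\lambda s} f(s)\,ds$ and observe that, by the inhomogeneous Strichartz estimate, $\|T^\lambda f\|_{L^q_t L^r_x} \le C(|I|,r,\tilde r) \|e^{ia\lambda s} f\|_{L^{\tilde q'}_t L^{\tilde r'}_x} = C \|f\|_{L^{\tilde q'}_t L^{\tilde r'}_x}$, with a constant \emph{independent of $\lambda$} since $|e^{ia\lambda s}|=1$. This uniform bound is the crucial ingredient that lets the density argument work. Next I would fix a convenient dense subset of $L^{\tilde q'}([0,T];L^{\tilde r'}(\R^N))$ — for instance finite sums of the form $f(s,x) = \varphi(s) g(x)$ with $\varphi \in C^1_c((0,T))$ and $g \in L^2 \cap L^{\tilde r'}$, which are dense in the relevant mixed-norm space. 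For such a separated-variables term the inner integral becomes $S_0(t)\int_0^t S_0(-s) e^{ia\lambda s}\varphi(s)\,ds\, g$, and I would integrate by parts in $s$, moving the derivative onto $S_0(-s)\varphi(s)$: the factor $\frac{1}{ia\lambda}e^{ia\lambda s}$ produced by integrating $e^{ia\lambda s}$ yields an overall $\tfrac{1}{|\lambda|}$ gain, while the boundary terms and the term hitting $\d_s(S_0(-s)\varphi(s)) = S_0(-s)(iH\varphi + \varphi')$ remain bounded in $L^2$ uniformly in $\lambda$ (using that $g$ is nice enough that $Hg$ is controlled, or by first smoothing $g$). Applying the Strichartz bound once more to each resulting piece gives $\|T^\lambda f\|_{L^q_t L^r_x} \lesssim \tfrac{1}{|\lambda|} \to 0$ for every $f$ in the dense class.

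To conclude, I would run the standard $\eps/3$ argument: given arbitrary $f \in L^{\tilde q'}_t L^{\tilde r'}_x$ and $\eps>0$, choose $f_\eps$ in the dense class with $\|f-f_\eps\|_{L^{\tilde q'}_t L^{\tilde r'}_x} < \eps$; then
\begin{equation*}
\|T^\lambda f\|_{L^q_t L^r_x} \le \|T^\lambda(f-f_\eps)\|_{L^q_t L^r_x} + \|T^\lambda f_\eps\|_{L^q_t L^r_x} \le C\eps + \tfrac{C}{|\lambda|},
\end{equation*}
and the right-hand side is $\le (C+1)\eps$ once $|\lambda|$ is large. Since $\eps$ is arbitrary, \eqref{eq:93} follows.

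\textbf{The main obstacle} I anticipate is the integration-by-parts step on the dense class: one must commute $S_0(-s)$ through $\d_s$ correctly (the relevant identity being $\d_s S_0(-s) = iH S_0(-s)$) and then ensure the term involving $H g$ stays in $L^2$. The clean way around this is to choose the dense class carefully — taking $g$ in the Schwartz class or in the domain of $H$, and $\varphi$ smooth and compactly supported in the open interval so that the boundary contributions at $s=0$ and $s=t$ vanish — so that every factor produced by the integration by parts lands in $L^2(\R^N)$ with norm independent of $\lambda$. With that choice, each piece is handled by a single clean application of the Strichartz inequality, and the only $\lambda$-dependence is the explicit $1/|a\lambda|$ prefactor, which drives the limit.
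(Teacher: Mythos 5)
Your proposal is correct and is essentially the paper's own proof: a $\lambda$-uniform inhomogeneous Strichartz bound justifies the reduction by density to smooth data (the paper works with $f\in\cfun^1([0,T];\mathcal S(\R^N))$ rather than sums of separated tensors $\varphi(s)g(x)$, an immaterial difference), after which integration by parts in $s$ against $e^{ia\lambda s}$ produces the $1/|a\lambda|$ factor, with the boundary terms and the terms involving $\d_s f$ and $Hf$ controlled by a further application of Strichartz estimates. One small correction: the boundary contribution at the upper limit $s=t$ (a running time, not $T$) equals $\frac{1}{ia\lambda}e^{ia\lambda t}\varphi(t)g$ and does not vanish for $t$ in the support of $\varphi$, even if $\varphi$ is compactly supported in $(0,T)$; it is simply $O(1/|\lambda|)$ in every Strichartz norm, which is all your argument needs.
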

\begin{proof}
First of all, let us notice that by Strichartz estimates on the space-time slab $\displaystyle[0, T]\times\R^N$ we have
\begin{equation*}
\|\int_0^tS_0(t-s)e^{ia\lambda s}f(s)ds\|_{L^q_tL^r_x}\lesssim\|f\|_{L^{\tilde q'}_tL^{\tilde r'}_x}.
\end{equation*}
Then, by using a standard density argument, we see it suffices to prove \eqref{eq:93} for all $f\in\cfun^1([0, T];\mathcal S(\R^N))$. Let us consider the integral in \eqref{eq:93}, by integration by parts we obtain
\begin{equation*}
\int_0^te^{ia\lambda s}S_0(t-s)f(s)ds=\frac{1}{ia\lambda}\pare{e^{ia\lambda t}f(t)-S_0(t)f(0)}
-\frac{1}{ia\lambda}\int_0^tS_0(t-s)\left[\d_sf(s)-Hf(s)\right]ds.
\end{equation*}
Thus again by Strichartz estimates we get
\begin{equation*}
\|\int_0^tS_0(t-s)e^{ia\lambda s}f(s)ds\|_{L^q_tL^r_x}\lesssim
\frac{1}{|\lambda|}\pare{\|f\|_{L^q_tL^r_x}+\|f(0)\|_{L^2}+\|\d_sf-Hf\|_{L^{\tilde q'}_tL^{\tilde r'}_x}},
\end{equation*}
and the right hand side goes to zero as $|\lambda|\to\infty$.
\end{proof}
The next Proposition shows that, as long as we have uniform bounds on the $L^\infty_t\Sigma_x-$norm of $\Phi^\lambda$ on a space-time slab, then we can prove the convergence of $\Phi^\lambda$ towards $U$ in that space-time slab.
\begin{proposition}\label{lemma:14} 
Let $\Phi_0\in \Sigma(\R^N)$ and let $\Phi^{\lambda}$ denote the maximal solution of \eqref{eq:phi_sys}. Let $U$ be the maximal solution of \eqref{eq:u_sys}, defined on $[0,S_{max})$. Let $0<\ell<S_{max}$ and assume that $\Phi^{\lambda}$ exists on $[0,\ell]\times\R^N$ and that
\begin{equation}\label{eq:94}
\limsup_{|\lambda| \rightarrow \infty} \|\Phi^{\lambda}\|_{L^{\infty}([0,\ell];\Sigma(\R^N))}< \infty.
\end{equation}
Then we have
\begin{equation}
\lim_{|\lambda| \rightarrow \infty}  \| \left(\begin{array}{c} 1\\\na \\|\cdot| \end{array}\right)(\Phi^{\lambda}-U)\|_{L^q([0,\ell];L^r(\R^N))}=0
\end{equation}
for any admissible pairs $(q,r)$. In particular $\Phi^{\lambda}\rightarrow U$ in $L^{\infty}([0,\ell];\Sigma(\R^N))$.
\end{proposition}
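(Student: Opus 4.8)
The plan is to control the difference $W^\lambda:=\Phi^\lambda-U$ through its Duhamel representation, isolating the genuinely oscillating contribution (which vanishes in the limit by Lemma \ref{lemma:weak_conv}) from a cubic Lipschitz contribution (which is absorbed by a short-time contraction argument), and then to propagate the resulting convergence across $[0,\ell]$ by a finite iteration. The one point that requires care --- and which I expect to be the main obstacle --- is that the oscillating nonlinearity $R^\lambda[\Phi^\lambda]\Phi^\lambda$ depends on $\lambda$ through $\Phi^\lambda$ itself, whereas Lemma \ref{lemma:weak_conv} is stated for a profile $f$ that is \emph{independent} of $\lambda$; so the lemma cannot be invoked on it directly.

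First I would subtract the Duhamel formulations of \eqref{eq:phi_sys} and \eqref{eq:u_sys}, whose free evolutions $S_0(t)\Phi_0$ coincide and cancel, to get
\begin{equation*}
W^\lambda(t)=-i\int_0^tS_0(t-s)\pare{\hat B^\infty[\Phi^\lambda]\Phi^\lambda-\hat B^\infty[U]U}(s)\,ds-i\int_0^tS_0(t-s)R^\lambda[\Phi^\lambda]\Phi^\lambda(s)\,ds.
\end{equation*}
To circumvent the obstacle above I would split the oscillating term as $R^\lambda[\Phi^\lambda]\Phi^\lambda=R^\lambda[U]U+\pare{R^\lambda[\Phi^\lambda]\Phi^\lambda-R^\lambda[U]U}$. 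The point is that, by \eqref{eq:R_lambda}, the term $R^\lambda[U]U$ is a finite sum $\sum_{a\in\{\pm2,\pm4\}}e^{ia\lambda s}g_a$ with profiles $g_a$ built from the \emph{fixed} solution $U$; hence $\int_0^tS_0(t-s)R^\lambda[U]U\,ds$ tends to $0$ in every admissible Strichartz norm by Lemma \ref{lemma:weak_conv}. The same holds after applying $\na$ or $|\cdot|$: using the commutation relations \eqref{eq:comm} to move these operators past $S_0(t-s)$ and expanding the resulting $\gamma(t-s)$-trigonometric factors, one again obtains finite sums of Duhamel integrals of $\lambda$-oscillations times fixed profiles (now involving $\na g_a$ and $|\cdot|g_a$), to which Lemma \ref{lemma:weak_conv} applies. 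This is the term carrying the smallness. The remainder $R^\lambda[\Phi^\lambda]\Phi^\lambda-R^\lambda[U]U$, whose oscillating coefficients have modulus one, satisfies exactly the cubic difference bounds of \eqref{eq:11}--\eqref{eq:13}, so it can be grouped with the $\hat B^\infty$-difference into a single Lipschitz contribution.

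Next I would estimate this Lipschitz contribution in the norms of $W^\lambda,\na W^\lambda,|\cdot|W^\lambda$, arguing exactly as in the proof of Proposition \ref{prop:LWP}: apply the Strichartz estimates of Proposition \ref{prop:strich} together with \eqref{eq:comm}, use the cubic Leibniz-type difference estimates descending from \eqref{eq:11}--\eqref{eq:13}, the Sobolev embedding $H^1\hookrightarrow L^4$, and H\"older in time. On a subinterval of length $\tau$ this yields a bound of schematic form
\begin{equation*}
\sum_{D\in\{1,\na,|\cdot|\}}\|D\,W^\lambda\|_{L^q_tL^r_x}\le C\,\tau^{\theta}\pare{\sup_{\lambda}\|\Phi^\lambda\|_{L^\infty_t\Sigma_x}^2+\|U\|_{L^\infty_t\Sigma_x}^2}\sum_{D\in\{1,\na,|\cdot|\}}\|D\,W^\lambda\|_{L^q_tL^r_x}+o_\lambda(1),
\end{equation*}
for some $\theta>0$, where $o_\lambda(1)$ is the Strichartz size of the $R^\lambda[U]U$-term. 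Here the uniform bound on $\Phi^\lambda$ is hypothesis \eqref{eq:94}, while the bound on $U$ is automatic since $U$ is fixed and $\ell<S_{max}$ (Proposition \ref{prop:u_lwp}); folded into this step is the standard fact that the local Strichartz norms of $\Phi^\lambda$ on short intervals are controlled by its $\Sigma$-norm, again through Proposition \ref{prop:LWP}, so all constants are uniform in $\lambda$. Choosing $\tau$ small enough --- depending only on these uniform bounds and \emph{not} on $\lambda$ --- the prefactor is $\le\tfrac12$ and the Lipschitz term is absorbed into the left-hand side, leaving $\sum_D\|D\,W^\lambda\|_{L^q_tL^r_x([0,\tau])}\le 2\,o_\lambda(1)\to0$ as $|\lambda|\to\infty$.

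Finally I would iterate this estimate over the finitely many subintervals $[k\tau,(k+1)\tau]$ covering $[0,\ell]$. Restarting Duhamel at $t=k\tau$ produces an extra linear term $S_0(t-k\tau)W^\lambda(k\tau)$, whose full Strichartz norm is $\lesssim\|W^\lambda(k\tau)\|_{\Sigma}$ and tends to $0$ by the conclusion on the previous interval (take $(q,r)=(\infty,2)$); the same absorption then closes the induction step. As $\tau$ is uniform, finitely many steps suffice, and one obtains $\|W^\lambda\|_{L^q_tL^r_x}+\|\na W^\lambda\|_{L^q_tL^r_x}+\||\cdot|W^\lambda\|_{L^q_tL^r_x}\to0$ on $[0,\ell]$ for every admissible $(q,r)$; specializing to $(\infty,2)$ gives the convergence $\Phi^\lambda\to U$ in $\cfun([0,\ell];\Sigma(\R^N))$.
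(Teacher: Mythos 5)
Your proposal is correct, and it follows the paper's overall scheme---the same Duhamel splitting into a Lipschitz part and an oscillating part, Strichartz estimates, Lemma \ref{lemma:weak_conv}, a continuity/absorption argument, and the same treatment of $\na$ and $|\cdot|$ via the commutation relations \eqref{eq:comm}---but it diverges at exactly the step where the paper is least careful, and your variant is the more rigorous one. The paper bounds $\|R^\lambda[\Phi^\lambda]\Phi^\lambda\|_{L^{8/(8-N)}_tL^{4/3}_x}\lesssim\|\Phi^\lambda\|^3_{L^\infty_tH^1_x}$ and then invokes Lemma \ref{lemma:weak_conv} \emph{directly} on $I_2$ (and later on $I_3$), justified only by the remark that ``every cubic term in $R^\lambda[\Phi^\lambda]\Phi^\lambda$ has an oscillatory coefficient in front of it''; as you observe, the lemma is stated for a profile $f$ independent of $\lambda$, whereas here the profile is built from $\Phi^\lambda$ itself, so the lemma does not literally apply (its proof approximates a \emph{fixed} $f$ by smooth functions and integrates by parts, and neither step is automatically uniform over a $\lambda$-dependent family). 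Your decomposition $R^\lambda[\Phi^\lambda]\Phi^\lambda=R^\lambda[U]U+\pare{R^\lambda[\Phi^\lambda]\Phi^\lambda-R^\lambda[U]U}$ repairs this: the first piece has fixed profiles built from $U$, so Lemma \ref{lemma:weak_conv} applies verbatim (also after $\na$ and $|\cdot|$ are commuted through $S_0(t-s)$), while the second piece obeys the same cubic difference bounds as the $\hat B^\infty$ part, since the oscillating coefficients are unimodular, and is absorbed together with $I_1$; this is precisely the device in Cazenave--Scialom that makes the limit rigorous. The remaining difference is cosmetic: you close the estimate by an explicit subdivision of $[0,\ell]$ into finitely many intervals of a $\lambda$-independent length $\tau$ with Duhamel restarts, where the paper compresses the same reasoning into ``a standard continuity argument.'' In short: same skeleton, but your handling of the oscillatory integral fills a genuine gap in the published proof, at the modest cost of carrying one extra Lipschitz remainder through the bootstrap.
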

\begin{proof}
By hypothesis \eqref{eq:94}, we can take $L$ large enough such that
\begin{equation*}
\sup_{|\lambda|\geq L}\|\Phi^{\lambda}\|_{L^{\infty}([0,\ell];\Sigma(\R^N))} < \infty.
\end{equation*}
By using Strichartz estimates similarly to the proof of Proposition \ref{prop:LWP} we see that from the bound above we also infer for a space-time slab $[0,\ell]\times\R^N$:
\begin{equation}\label{eq:95}
\sup_{|\lambda|\geq L}\sup_{(q, r)}
\|\left(\begin{array}{c}1\\\nabla\\|\cdot|\end{array}\right)\Phi^\lambda\|_{L^q_tL^r_x}<\infty,
\end{equation}
where the second supremum is taken over all admissible pairs $(q, r)$. If $\Phi^\lambda$ is a solution to \eqref{eq:phi_sys} and $U$ to \eqref{eq:u_sys}, then we have
\begin{equation*}
\begin{aligned}
\Phi^\lambda(t)-U(t)=&-i\int_0^tS_0(t-s)\left(\hat B^\infty[\Phi^\lambda]\Phi^\lambda-\hat B^\infty[U]U\right)(s)ds\\
&-i\int_0^tS_0(t-s)R^\lambda[\Phi^\lambda]\Phi^\lambda(s)ds=:-iI_1-iI_2,
\end{aligned}
\end{equation*}
where $\hat B^\infty, R^\lambda$ are defined in \eqref{eq:b_infty}, \eqref{eq:R_lambda}. We first consider $I_1$; by using Strichartz estimates, inequality \eqref{eq:11}, H\"older's inequality and then \eqref{eq:95}, we have 
\begin{equation*}
\begin{aligned}
\|I_1\|_{L^q_tL^r_x}\leq &C (\|\Phi^{\lambda}\|^2_{L^{\infty}_tL^4_x}+\|U\|^2_{L^{\infty}_tL^4_x}) 
\| \Phi^{\lambda}-U\|_{L^{\frac{8}{8-N}}_tL^4_x}\\
\leq&  C  \| \Phi^{\lambda}-U\|_{L^{\frac{8}{8-N}}_tL^4_x}.
\end{aligned}
\end{equation*}
For $I_2$ we use Lemma \ref{lemma:weak_conv}, indeed by Sobolev embedding we have
\begin{equation*}
\|R^\lambda[\Phi^\lambda]\Phi^\lambda\|_{L^{\frac{8}{8-N}}_tL^{4/3}_x}
\lesssim\|\Phi^\lambda\|_{L^\infty_tL^4_x}^3
\lesssim\|\Phi^\lambda\|_{L^\infty_tH^1_x}^3,
\end{equation*}
and consequently
\begin{equation*}
\|\int_0^tS_0(t-s)R^\lambda[\Phi^\lambda]\Phi^\lambda(s)ds\|_{L^q_tL^r_x}
\end{equation*}
converges to zero as $|\lambda|\to0$, for any admissible pair $(q, r)$, because every cubic term in 
$R^\lambda[\Phi^\lambda]\Phi^\lambda$ has an oscillatory coefficient in front of it.
By choosing $(q, r)=(\frac{8}{N}, 4)$, we have
\begin{equation*}
\|\Phi^\lambda-U\|_{L^{8/N}_tL^4_x}\leq\eps+
C\|\Phi^\lambda-U\|_{L^{\frac{8}{8-N}}_tL^{4}_x}.
\end{equation*}
A standard continuity argument gives us
\begin{equation*}
\|\Phi^\lambda-U\|_{L^{8/N}_tL^4_x}\leq C\eps,
\end{equation*}
which proves the convergence $\Phi^\lambda\to U$ in $L^{8/N}_tL^4_x$. 
The same holds for any admissible pair $(q, r)$, since
\begin{equation*}
\|\Phi^\lambda-U\|_{L^q_tL^r_x}\leq\|I_1\|_{L^q_tL^r_x}+\|I_2\|_{L^q_tL^r_x}
\leq\eps+C\|\Phi^\lambda-U\|_{L^{\frac{8}{8-N}}_tL^4_x}
\leq\eps+C\|\Phi^\lambda-U\|_{L^{8/N}_tL^4_x},
\end{equation*}
where in the last inequality we used H\"older in time.
Consequently $\Phi^\lambda\to U$ in $L^q_tL^r_x([0, \ell]\times\R^N)$ for any admissible pair. We now prove the convergence of $\nabla\Phi^\lambda$ and $|\cdot|\Phi^\lambda$ in the same spaces. Again we consider the difference
\begin{equation*}
\nabla(\Phi^\lambda-U)+x(\Phi^\lambda-U)=:-iI_1-iI_2-iI_3,
\end{equation*}
where, by using the commutator relations of the Hamiltonian $H$ with $x$ and $\nabla$, see \eqref{eq:comm}, we obtain
\begin{equation*}
\begin{aligned}
I_1:=&\int_0^tS_0(t-s)\nabla\left(\hat B^\infty[\Phi^\lambda]\Phi^\lambda-\hat B^\infty[U]U\right)(s)ds,\\
I_2:=&\int_0^tS_0(t-s)x\left(\hat B^\infty[\Phi^\lambda]\Phi^\lambda-\hat B^\infty[U]U\right)(s)ds,\\
I_3:=&\int_0^tS_0(t-s)\left(\nabla(R^\lambda[\Phi^\lambda]\Phi^\lambda)+xR^\lambda[\Phi^\lambda]\Phi^\lambda\right)(s)ds.
\end{aligned}
\end{equation*}
By using Strichartz estimates, inequality \eqref{eq:12} and Sobolev embedding, we have
\begin{equation*}
\|I_1\|_{L^{8/N}_tL^4_x}\leq C\left(\|\Phi^\lambda\|_{L^\infty_tH^1_x}^2+\|U\|_{L^\infty_tH^1_x}^2\right)
\|\nabla(\Phi^\lambda-U)\|_{L^{\frac{8}{8-N}}_tL^{4}_x}.
\end{equation*}
Analogously, for $I_2$ we have
\begin{equation*}
\|I_2\|_{L^{8/N}_tL^4_x}\leq C\left(\|\Phi^\lambda\|_{L^\infty_tH^1_x}^2+\|U\|_{L^\infty_tH^1_x}^2\right)
\||\cdot|(\Phi^\lambda-U)\|_{L^{\frac{8}{8-N}}_tL^{4}_x}.
\end{equation*}
Finally, for $I_3$ we use inequality \eqref{eq:13}, Sobolev embedding and \eqref{eq:95} to estimate
\begin{equation*}
\|\nabla(R^\lambda[\Phi^\lambda]\Phi^\lambda)+|\cdot|R^\lambda[\Phi^\lambda]\Phi^\lambda\|_{L^{\frac{8}{8-N}}_tL^{4/3}_x}
\leq C\|\Phi^\lambda\|_{L^\infty_tH^1_x}^2\left(\|\nabla\Phi^\lambda\|_{L^{8/N}_tL^4_x}
+\||\cdot|\Phi^\lambda\|_{L^{8/N}_tL^4_t}\right)<\infty.
\end{equation*}
Hence we can apply Lemma \ref{lemma:weak_conv} and infer that
\begin{equation*}
\|I_3\|_{L^q_tL^r_x}\to0,\textrm{as}\;|\lambda|\to\infty,
\end{equation*}
for any $(q, r)$ admissible pair. By resuming, we have obtained
\begin{equation*}
\|\nabla(\Phi^\lambda-U)\|_{L^{8/N}_tL^4_x}
+\||\cdot|(\Phi^\lambda-U)\|_{L^{8/N}_tL^4_x}\leq\eps
+C\left(\|\nabla(\Phi^\lambda-U)\|_{L^{8/N}_tL^4_x}
+\||\cdot|(\Phi^\lambda-U)\|_{L^{8/N}_tL^4_x}\right).
\end{equation*}
As before, a standard continuity argument gives us 
\begin{equation*}
\|\nabla(\Phi^\lambda-U)\|_{L^{8/N}_tL^4_x}
+\||\cdot|(\Phi^\lambda-U)\|_{L^{8/N}_tL^4_x}\leq C\eps,
\end{equation*}
which shows the convergence in $L^{8/N}_tL^4_x([0, \ell]\times\R^N)$ as $|\lambda|\to\infty$. The convergence in $L^q_tL^r_x([0, \ell]\times\R^N)$ for any $(q, r)$ admissible pair then follows as before, by using Strichartz estimates.
\end{proof}
The Lemma above states that, as long as the $L^{\infty}([0, \ell]; \Sigma(\R^N))-$norm of the family $\{\Phi^\lambda\}$ of solutions for the Cauchy problem related to \eqref{eq:phi_sys} stays eventually bounded (as $|\lambda|\to\infty$) in a time interval $[0, \ell]$, then the family converges in to $U$ solution to \eqref{eq:u_sys} with the same initial datum, in every Strichartz space.
\newline
It thus remains to prove that given any time interval $[0, T]$ with $0<T<S_{max}$, the $L^{\infty}([0, T]; \Sigma(\R^N))-$norm of 
$\Phi^\lambda$ is indeed uniformly bounded, if we take $|\lambda|$ sufficiently large.
\newline
\emph{Proof of Theorem \ref{thm:main}.}
Let then $T$ be a positive time strictly less than the maximal existence time for the solution $U$ of 
\eqref{eq:u_sys}, i.e. $0<T<S_{max}$, and let us fix the constant
\begin{equation*}
M:=2C\|U\|_{L^\infty([0, T]; \Sigma(\R^N))},
\end{equation*}
where $C$ is the constant in \eqref{eq:lin_pert}.
Let $\delta=\delta(M)$ be the constant in Proposition \ref{prop:LWP}. Then $\Phi^\lambda$ exists in $[0, \delta]$ for all $\lambda$ and furthermore
\begin{equation*}
\sup_{\lambda\in\R}\|\Phi^\lambda\|_{L^\infty((0, \delta); \Sigma(\R^N))}\leq 2C\|\Phi_0\|_{\Sigma(\R^N)},
\end{equation*}
by \eqref{eq:lin_pert}.
Now, let $0<\ell\leq T$ be such that $\Phi^\lambda$ exists in $[0, \ell]$, and that we have
\begin{equation*}
\limsup_{|\lambda|\to\infty}\|\Phi^\lambda\|_{L^\infty((0, \ell); \Sigma(\R^N))}<\infty.
\end{equation*}
Notice that by the inequality above we see that we can always choose $\ell=\delta$. Thus, by Proposition \ref{lemma:14} we have
\begin{equation*}
\lim_{|\lambda|\to\infty}\left(
\|\Phi^\lambda-U\|_{L^q((0, \ell); L^r(\R^N))}
+\|\nabla(\Phi^\lambda-U)\|_{L^q((0, \ell); L^r(\R^N))}
+\||\cdot|(\Phi^\lambda-U)\|_{L^q((0, \ell); L^r(\R^N))}
\right)=0
\end{equation*}
for all admissible pairs $(q, r)$. In particular we have
\begin{equation*}
\lim_{|\lambda|\to\infty}\|\Phi^\lambda(\ell)-U(\ell)\|_{\Sigma(\R^N)}=0.
\end{equation*}
This implies that 
\begin{equation*}
\sup_{|\lambda|\geq\Lambda}\|\Phi^\lambda(\ell)\|_{\Sigma(\R^N)}\leq M,
\end{equation*}
for some $\Lambda>0$ sufficiently large. \newline
We can thus repeat the same argument, starting at time $t=\ell$. Consequently we have the solution $\Phi^\lambda$ exists in 
$[0, \delta+\ell]$ and we have a uniform bound for the $\Sigma-$norm of $\Phi^\lambda$,
\begin{equation*}
\sup_{|\lambda|\geq\Lambda}\|\Phi^\lambda\|_{L^\infty((0, \delta+\ell); \Sigma(\R^N))}\leq 2CM.
\end{equation*}
Thus we can apply again Lemma \ref{lemma:14} in the time interval $[\ell, \delta+\ell]$ and obtain the convergence of 
$\Phi^\lambda$ to $U$. Again, because of the convergence we have
\begin{equation*}
\lim_{|\lambda|\to\infty}\|\Phi^\lambda(\delta+\ell)-U(\delta+\ell)\|_{\Sigma(\R^N)}=0,
\end{equation*}
and in particular
\begin{equation*}
\sup_{|\lambda|\geq\Lambda'}\|\Phi^\lambda(\delta+\ell)\|_{\Sigma(\R^N)}\leq M.
\end{equation*}
Thus we repeat this argument to prove the result in the whole time interval $[0, T]$ and Theorem \ref{thm:main} is proved.
\qed\newline
\section*{Acknowledgements}
 The work of the second author has been supported by the Hertha-Firnberg Program 
 of the Austrian Science Fund (FWF), grant T402-N13.
\appendix 
\section{Simple example}
To illustrate the application of the presented theory we give a very simple example.
Let us consider the case of one focusing and one defocusing nonlinearity, where the interspecific scattering length is zero. Thus we consider the following system in $N=2$:
\begin{equation}\label{eq:example}
i\d_t\Psi=-\frac12\Delta\Psi+\frac{\gamma^2}{2}|x|^2\Psi+\tilde B[\Psi]\Psi
+A\Psi,
\end{equation}
with $A$ as defined in \eqref{eq:A}, and
\begin{align*}
\tilde B[\Psi]=&\left(\begin{array}{cc}-\beta |\psi_1|^2&0\\
0&\beta|\psi_2|^2\end{array}\right)\quad \beta>0.
\end{align*}
We choose initial data such that the condition (3) in Theorem \ref{thm:GWP1} is not satisfied, thus the initial mass M(0) is not too small, and none of the conditions (i)-(iii) of Theorem \ref{thm:blow-up} is satisfied. In this way, we are actually not able to say if the solution may blow-up or exist globally. 
\begin{remark}
If $\lambda=0$ the system is decoupled and we have two scalar nonlinear Schr\"odinger equations, for which we know that $\psi_2$ exists globally, and $\psi_1$ may blow up in finite time for initial mass $\| \psi_{1, 0}\|_{L^2(\R^2)}\geq \|Q\|_{L^2(\R^N)}$ with $Q$ being the unique positive radial solution to $\Delta Q+Q^3-Q=0$.
\end{remark}
Performing the asymptotics for $|\lambda| \rightarrow \infty$ we observe that $\Psi$ converges to $U$ solution to the linear Schr\"odinger system:
\begin{equation*}
i\d_t U =-\frac{1}{2}\Delta U+\frac{\gamma^2}{2}|x|^2 U.
\end{equation*}
Applying Theorem \ref{thm:main}  we can say that the solution $\Psi$ of \eqref{eq:example} exists in $[0,T]$, for any $T< \infty$, provided that $\lambda$ is taken sufficiently large. 
\bibliography{mybibasymptotics}{}
\bibliographystyle{plain}
\end{document}